\documentclass[reqno]{amsart}
\usepackage{amssymb,latexsym}
\usepackage{graphicx}
\usepackage{fancyhdr}
\numberwithin{equation}{section}
\newtheorem{thm}{Theorem}[section]
\newtheorem{theorem}[thm]{Theorem}

\newtheorem{definition}[thm]{Definition}
\newtheorem{proposition}[thm]{Proposition}
\newtheorem{corollary}[thm]{Corollary}

\begin{document}

\setcounter{page}{1}

\title[generalized $L$-functions]{On the special values of Berndt's generalized $L$-function}
\thanks{2020 Mathematics Subject Classification. 11B68; 42A16; 11M41.}\thanks{Keywords. Generalized Apostol-Bernoulli polynomials; Generalized Apostol-Bernoulli functions; Fourier series; $L$-functions, Berndt's functional equation.}
\author{Yuan He}
\address{School of Mathematics and Big Data, Neijiang Normal University, Neijiang 641100, Sichuan, People's Republic of China}
\email{hyyhe@aliyun.com}

\begin{abstract}
In this paper, we study the generalized $L$-function considered by Berndt (1975). We introduce the generalized Apostol-Bernoulli polynomials and the generalized Apostol-Bernoulli functions, and establish some properties for them, including the Fourier series for these functions. We show that the values of Berndt's generalized $L$-function at integers are explicitly evaluated in terms of the generalized Apostol-Bernoulli functions.
\end{abstract}

\maketitle

\section{Introduction}\label{sec1}

Let $\mathbb{N}$ be the set of positive integers, $\mathbb{N}_{0}$ the set of non-negative integers, $\mathbb{Z}$ the set of integers, $\mathbb{R}$ the set of real numbers, and $\mathbb{C}$ the set of complex numbers. For $s\in\mathbb{C}$ and for a Dirichlet character $\chi$ modulo $q\in\mathbb{N}$, the Dirichlet
$L$-function is defined for $\Re(s)>1$ by the series
\begin{equation}\label{eq1.1}
L(s,\chi)=\sum_{n=1}^{\infty}\frac{\chi(n)}{n^{s}}.
\end{equation}
This function has a meromorphic continuation to $\mathbb{C}$. It is holomorphic if $B_{0}(\chi)=0$
(i.e., $\chi$ is non-principal), and has a simple pole at $s=1$ with residue $B_{0}(\chi)$
otherwise (i.e., if $\chi$ is the principal character), where $B_{n}(\chi)$ denotes the generalized Bernoulli numbers defined by the generating function of Leopoldt \cite{leopoldt}
\begin{equation}\label{eq1.2}
\sum_{r=1}^{q}\frac{\chi(r)te^{rt}}{e^{qt}-1}=\sum_{n=0}^{\infty}B_{n}(\chi)\frac{t^{n}}{n!}\quad(|t|<2\pi/q).
\end{equation}

An approach to the study of the Dirichlet $L$-function is based on the use of the relation that for $s\in\mathbb{C}$,
\begin{equation}\label{eq1.3}
L(s,\chi)=\frac{1}{q^{s}}\sum_{r=1}^{q}\chi(r)\zeta\biggl(s,\frac{r}{q}\biggl),
\end{equation}
where $\zeta(s,a)$ is the Hurwitz zeta-function defined for $\Re(s)>1$, $0<a\leqslant1$ by
\begin{equation}\label{eq1.4}
\zeta(s,a)=\sum_{n=0}^{\infty}\frac{1}{(n+a)^{s}}.
\end{equation}
Building on Riemann's first proof \cite{riemann} of the functional equation for the Riemann zeta-function, Hurwitz \cite{hurwitz} in 1882 found the functional equation for the Hurwitz zeta-function, as follows,
\begin{equation}\label{eq1.5}
\zeta\biggl(1-s,\frac{h}{k}\biggl)=\frac{2\Gamma(s)}{(2\pi k)^{s}}\sum_{r=1}^{k}\cos\biggl(\frac{\pi s}{2}-\frac{2\pi rh}{k}\biggl)\zeta\biggl(s,\frac{r}{k}\biggl),
\end{equation}
where $s\in\mathbb{C}$, $h,k\in\mathbb{N}$ with $1\leqslant h\leqslant k$, and $\Gamma(s)$ is the gamma function. Obviously, the case $h=k=1$ in \eqref{eq1.5} reduces to Riemann's functional equation for the Riemann
zeta-function:
\begin{equation}\label{eq1.6}
\zeta(1-s)=\frac{2\Gamma(s)}{(2\pi)^{s}}\cos\biggl(\frac{\pi s}{2}\biggl)\zeta(s)\quad(s\in\mathbb{C}).
\end{equation}
As part of his proof of \eqref{eq1.5}, Hurwitz \cite{hurwitz} also proved that if $0<a\leqslant1$ and $\Re(s)>1$, or if $0<a<1$ and $\Re(s)>0$, then
\begin{equation}\label{eq1.7}
\zeta(1-s,a)=\frac{\Gamma(s)}{(2\pi)^{s}}\bigl(e^{-\frac{\pi \mathrm{i}s}{2}}F(a,s)+e^{\frac{\pi \mathrm{i}s}{2}}F(-a,s)\bigl),
\end{equation}
where $\mathrm{i}^{2}=-1$, and $F(x,s)$ is the periodic zeta-function defined for $x\in\mathbb{R}$, $\Re(s)>1$ by
\begin{equation}\label{eq1.8}
F(x,s)=\sum_{n=1}^{\infty}\frac{e^{2\pi \mathrm{i}nx}}{n^{s}}.
\end{equation}
It should be noted that the series \eqref{eq1.8} converges absolutely for $\Re(s)>1$, and, if $x\in\mathbb{R}\setminus\mathbb{Z}$, it converges conditionally for $\Re(s)>0$. In fact, using Hurwitz's formula \eqref{eq1.7}, we can recover that the values of $\zeta(s,a)$ at non-positive integers are expressed in terms of the Bernoulli polynomial of degree $n$:
\begin{equation}\label{eq1.9}
\zeta(1-n,a)=-\frac{B_{n}(a)}{n}\quad(n\in\mathbb{N}).
\end{equation}
So if we take $s=1-n$ in \eqref{eq1.3} and then use \eqref{eq1.9}, we reobtain the result of Leopoldt \cite{leopoldt}:
\begin{equation}\label{eq1.10}
L(1-n,\chi)=-\frac{B_{n}(\chi)}{n}\quad(n\in\mathbb{N}).
\end{equation}
In 1970, Apostol \cite{apostol3} used \eqref{eq1.3} and Hurwitz's formula \eqref{eq1.7} to reprove the functional equation for the Dirichlet $L$-function:
\begin{equation}\label{eq1.11}
L(1-s,\chi)=\frac{q^{s-1}\Gamma(s)}{(2\pi)^{s}}\bigl(e^{-\frac{\pi \mathrm{i}s}{2}}+\chi(-1)e^{\frac{\pi \mathrm{i}s}{2}}\bigl)G(1,\chi)L(s,\overline{\chi}),
\end{equation}
where $s\in\mathbb{C}$, $\chi$ is a primitive character, $\overline{\chi}$ denotes the complex conjugation of $\chi$, and $G(n,\chi)$ is the Gauss sum defined by
\begin{equation*}
G(n,\chi)=\sum_{r=1}^{q}\chi(r)e^{\frac{2\pi \mathrm{i}rn}{q}}\quad(n\in\mathbb{Z}).
\end{equation*}
From \eqref{eq1.10} and \eqref{eq1.11}, it is easy to see that for a primitive character $\chi$, the values of $L(s,\chi)$ at positive integers can be given by the generalized
Bernoulli numbers (see, e.g., \cite[pp. 442--443]{neukirch}). In 2019, the author \cite{he1} used \eqref{eq1.3} and Hurwitz's functional equation \eqref{eq1.5} to show that the values of $L(s,\chi)$ at positive integers can be expressed in terms of the Bernoulli polynomials and the Gauss sum. We here mention that seven methods of proving Riemann's functional equation \eqref{eq1.6} can be found in Titchmarsh's \cite{titchmarsh} monograph, and a simple proof of Hurwitz's formula \eqref{eq1.7} was provided by Berndt \cite{berndt1} using the Euler-Maclaurin summation formula.

We now turn our attention to the extensions of the series \eqref{eq1.1}, \eqref{eq1.4} and \eqref{eq1.8}. In 1887, Lerch \cite{lerch} studied what is nowadays well-known as the Lerch zeta-function, namely
\begin{equation}\label{eq1.12}
\phi(x,a,s)=\sum_{n=0}^{\infty}\frac{e^{2\pi \mathrm{i}nx}}{(n+a)^{s}},
\end{equation}
where $x\in\mathbb{R}$, $a\in\mathbb{C}\setminus\{0,-1,-2,\ldots\}$, $\Re(s)>1$ if $x\in\mathbb{Z}$, and $\Re(s)>0$ otherwise, and used the contour integral
technique appearing in Riemann's first proof of \eqref{eq1.6} to prove that
if $0<x<1$ and $0<a\leqslant1$, then for all $s\in\mathbb{C}$,
\begin{eqnarray}\label{eq1.13}
&&\phi(x,a,1-s)\nonumber\\
&&\qquad=\frac{\Gamma(s)}{(2\pi)^{s}}\bigl(e^{\frac{\pi \mathrm{i}s}{2}-2\pi\mathrm{i}ax}\phi(-a,x,s)+e^{-\frac{\pi \mathrm{i}s}{2}+2\pi\mathrm{i}a(1-x)}\phi(a,1-x,s)\bigl).
\end{eqnarray}
It is worth noting that Lerch's functional equation \eqref{eq1.13} implies Hurwitz's formula \eqref{eq1.7}; see the 1951 work of Apostol \cite{apostol1}. In the same year, Apostol \cite{apostol2} reproved Lerch's functional equation \eqref{eq1.13} along the line of Riemann's second proof of \eqref{eq1.6}, and determined that the values of $\phi(x,a,s)$ at non-positive integers can be expressed by
\begin{equation}\label{eq1.14}
\phi(x,a,1-n)=-\frac{\beta_{n}(a,e^{2\pi \mathrm{i}x})}{n}\quad(n\in\mathbb{N}),
\end{equation}
where $\beta_{n}(x,\lambda)$ denotes the Apostol-Bernoulli polynomials, defined for $\lambda\in\mathbb{C}\setminus\{0\}$, by the generating function
\begin{equation}  \label{eq1.15}
\frac{te^{xt}}{\lambda e^{t}-1}=\sum_{n=0}^{\infty}
\beta_{n}(x,\lambda)\frac{t^{n}}{n!}\quad(|t+\log\lambda|<2\pi).
\end{equation}
Note that in \eqref{eq1.15}, the principal branch of the logarithm satisfies
\begin{equation*}
\log \lambda=\log|\lambda|+\mathrm{i}\arg \lambda \quad(-\pi<\arg \lambda\leqslant\pi).
\end{equation*}
Clearly, setting $\lambda=1$ in \eqref{eq1.15} gives the generating function of the Bernoulli polynomials. In 1972, Berndt \cite{berndt2} gave two new proofs of Lerch's functional equation \eqref{eq1.13}, one using contour integration and the other using the Euler-Maclaurin summation formula. Subsequently, Berndt \cite{berndt3} in 1975 considered the generalized $L$-function, which is defined by the series
\begin{equation}\label{eq1.16}
L(s,x,a,\chi)=\sideset{}{'}\sum_{n=0}^{\infty}\frac{\chi(n)e^{\frac{2\pi \mathrm{i}nx}{q}}}{(n+a)^{s}},
\end{equation}
where $q\in\mathbb{N}$ with $q\geqslant2$, $x\in\mathbb{R}$, $a\in\mathbb{C}$, $\chi$ is a Dirichlet character $\chi$ modulo $q$, the dash indicates that the term corresponding to $n=-a$ is omitted if $a\in\{0,-1,-2,\ldots\}$, $\Re(s)>1$ if $x\in\mathbb{Z}$ and $\gcd(x,q)=1$, and $\Re(s)>0$ otherwise. In particular, Berndt \cite[Theorem 5.1]{berndt3} showed that for $0\leqslant a\leqslant 1$, $L(s,x,a,\chi)$ can be analytically continued to $\mathbb{C}$ except when $x\in\mathbb{Z}$ and $\gcd(x,q)=1$, in which case $L(s,x,a,\chi)$ is analytic everywhere except $s=1$ (where it has a simple pole with residue $G(x,\chi)/q$), and he used his character analogues of the Euler-Maclaurin summation formula to prove that if $0\leqslant x\leqslant 1$, $0\leqslant a\leqslant 1$ and $\chi$ is a primitive character, then for all $s\in\mathbb{C}$,
\begin{eqnarray}\label{eq1.17}
L(1-s,x,a,\chi)&=&\frac{q^{s-1}\Gamma(s)}{(2\pi)^{s}}G(1,\chi)e^{-\frac{2\pi \mathrm{i}ax}{q}}\nonumber\\
&&\times\bigl(e^{-\frac{\pi \mathrm{i}s}{2}}L(s,a,-x,\overline{\chi})+\chi(-1)e^{\frac{\pi \mathrm{i}s}{2}}L(s,-a,x,\overline{\chi})\bigl).
\end{eqnarray}
Meanwhile, Berndt \cite[Definition 1]{berndt3} introduced the generalized Bernoulli functions, and used them in \eqref{eq1.17} to determine that if $\chi$ is a primitive character, then for $n\in\mathbb{N}$ and $0\leqslant a<1$,
\begin{equation}\label{eq1.18}
L(1-n,0,a,\chi)=-\frac{B_{n}(a,\chi)}{n},
\end{equation}
where $B_{n}(x,\chi)$ are the generalized Bernoulli polynomials, defined for a Dirichlet character $\chi$ modulo $q\in\mathbb{N}$, by the generating function
\begin{equation}\label{eq1.19}
\sum_{r=1}^{q}\frac{\chi(r)te^{(r+x)t}}{e^{qt}-1}=\sum_{n=0}^{\infty}B_{n}(x,\chi)\frac{t^{n}}{n!}\quad(|t|<2\pi/q).
\end{equation}

A natural question is whether, for a general $x$, the values of $L(s,x,a,\chi)$ at integers can be expressed by a sequence of polynomials. To this end, based on the work of Apostol \cite{apostol2}, Leopoldt \cite{leopoldt} and Berndt \cite{berndt3}, we construct the generalized Apostol-Bernoulli polynomials and the generalized Apostol-Bernoulli functions (see Definitions \ref{def2.1} and \ref{def2.7} below). We establish some basic properties for them and obtain the Fourier series for the generalized Apostol-Bernoulli functions. As applications, we show that the values of $L(s,x,a,\chi)$ at integers are explicitly expressed in terms of the generalized Apostol-Bernoulli functions.

This paper is organized as follows. Section \ref{sec2} presents the definitions of the generalized Apostol-Bernoulli polynomials and the generalized Apostol-Bernoulli functions, and establishes some basic properties of these polynomials and functions. Section \ref{sec3} presents the Fourier series of the generalized Apostol-Bernoulli functions, and uses it to express the values of $L(s,x,a,\chi)$ at integers and obtain a symmetric identity for the generalized Apostol-Bernoulli functions similar to that for the generalized Apostol-Bernoulli polynomials.

\section{Generalized Apostol-Bernoulli polynomials and functions}\label{sec2}

In order to explore the values of Berndt's generalized $L$-functions at integers, we first introduce the generalized Apostol-Bernoulli polynomials and the generalized Apostol-Bernoulli numbers.

\begin{definition}\label{def2.1} Let $n\in\mathbb{N}_{0}$ and $\lambda\in\mathbb{C}\setminus\{0\}$. For a Dirichlet character $\chi$ modulo $q\in\mathbb{N}$, the generalized Apostol-Bernoulli polynomials $\beta_{n,\chi}(x,\lambda)$ are defined by the generating function
\begin{equation}\label{eq2.1}
\sum_{r=1}^{q}\frac{\chi(r)\lambda^{r}te^{(r+x)t}}{\lambda^{q}e^{qt}-1}=\sum_{n=0}^{\infty}\beta_{n,\chi}(x,\lambda)\frac{t^{n}}{n!}\quad(|t+\log\lambda|<2\pi/q),
\end{equation}
where $\lambda^{z}=e^{z\log\lambda}$ for $z\in\mathbb{C}$ with $\log\lambda$ as defined in \eqref{eq1.15}. In particular, $\beta_{n,\chi}(\lambda)=\beta_{n,\chi}(0,\lambda)$ are called the generalized Apostol-Bernoulli numbers.
\end{definition}

It is trivial to see that the above Definition \ref{def2.1} contains the information about the Apostol-Bernoulli polynomials, the Apostol-Euler polynomials, the generalized Bernoulli polynomials and the generalized Euler polynomials. For example, setting $q=1$ in \eqref{eq2.1}, we have
\begin{equation}\label{eq2.2}
\beta_{n,\chi}(x-1,\lambda)=\lambda\beta_{n}(x,\lambda)\quad(n\in\mathbb{N}_{0}),
\end{equation}
and
\begin{equation}\label{eq2.3}
\frac{2\beta_{n+1,\chi}(x-1,-\lambda)}{n+1}=\lambda\varepsilon_{n}(x,\lambda)\quad(n\in\mathbb{N}_{0}),
\end{equation}
where $\varepsilon_{n}(x,\lambda)$ are the Apostol-Euler polynomials, defined for $\lambda\in\mathbb{C}\setminus\{0\}$, by the generating function (see, e.g., \cite{bayad2,luo2})
\begin{equation}\label{eq2.4}
\frac{2e^{xt}}{\lambda e^{t}+1}=\sum_{n=0}^{\infty}
\varepsilon_{n}(x,\lambda)\frac{t^{n}}{n!}\quad(|t+\log\lambda|<\pi).
\end{equation}
Clearly, the case $\lambda=1$ in \eqref{eq2.4} gives the generating function of the Euler polynomials $E_{n}(x)=\varepsilon_{n}(x,1)$.
It is also easily seen from \eqref{eq1.15} and \eqref{eq2.1} that the generalized Apostol-Bernoulli polynomials can be expressed in terms of the Apostol-Bernoulli polynomials:
\begin{equation}\label{eq2.5}
\beta_{n,\chi}(x,\lambda)=q^{n-1}\sum_{r=1}^{q}\chi(r)\lambda^{r}\beta_{n}\biggl(\frac{x+r}{q},\lambda^{q}\biggl)\quad(n\in\mathbb{N}_{0}).
\end{equation}
In addition, if we take $\lambda=1$ in \eqref{eq2.1}, then we have $\beta_{n,\chi}(x,1)=B_{n}(x,\chi)$ for $n\in\mathbb{N}_{0}$. If we take $\lambda=e^{\frac{\pi \mathrm{i}}{q}}$, then we have
\begin{equation}\label{eq2.6}
-\frac{2\beta_{n+1,\chi}(x,e^{\frac{\pi \mathrm{i}}{q}})}{n+1}=e_{n}(x,\chi)\quad(n\in\mathbb{N}_{0}),
\end{equation}
where $e_{n}(x,\chi)$ are called the generalized Euler polynomials of the first kind, defined for a Dirichlet character $\chi$ modulo $q\in\mathbb{N}$, by the generating function
\begin{equation}\label{eq2.7}
\sum_{r=1}^{q}\frac{\chi(r)e^{\frac{\pi \mathrm{i}r}{q}}2e^{(r+x)t}}{e^{qt}+1}=\sum_{n=0}^{\infty}e_{n}(x,\chi)\frac{t^{n}}{n!}\quad(|t|<\pi/q).
\end{equation}
If we take $\lambda=-1$, then we have
\begin{equation}\label{eq2.8}
-\frac{2\beta_{n+1,\chi}(x,-1)}{n+1}=E_{n}(x,\chi)\quad(n\in\mathbb{N}_{0},2\nmid q),
\end{equation}
where $E_{n}(x,\chi)$ are called the generalized Euler polynomials of the second kind, defined for a Dirichlet character $\chi$ modulo $q\in\mathbb{N}$, by the generating function
\begin{equation}\label{eq2.9}
\sum_{r=1}^{q}\frac{\chi(r)(-1)^{r}2e^{(r+x)t}}{e^{qt}+1}=\sum_{n=0}^{\infty}E_{n}(x,\chi)\frac{t^{n}}{n!}\quad(|t|<\pi/q).
\end{equation}

We now give some basic properties for the generalized Apostol-Bernoulli polynomials. The next results are analogous to the addition formulas and difference equations of the Bernoulli polynomials and the Euler polynomials stated in \cite[Chapter 2]{norlund}.

\begin{proposition}\label{pro2.2} Let $q\in\mathbb{N}$ and $\lambda\in\mathbb{C}\setminus\{0\}$. Let $\chi$ be a Dirichlet character modulo $q$. Then, for $n\in\mathbb{N}_{0}$,
\begin{equation}\label{eq2.10}
\beta_{n,\chi}(x+y,\lambda)=\sum_{k=0}^{n}\binom{n}{k}\beta_{k,\chi}(x,\lambda)y^{n-k},
\end{equation}
and for $n\in\mathbb{N}$,
\begin{equation}\label{eq2.11}
\lambda^{q}\beta_{n,\chi}(x+q,\lambda)-\beta_{n,\chi}(x,\lambda)=n\sum_{r=1}^{q}\chi(r)\lambda^{r}(x+r)^{n-1}.
\end{equation}
\end{proposition}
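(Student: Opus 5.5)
Both identities follow from manipulating the generating function \eqref{eq2.1}. Denote its left-hand side by
\[
F_{\chi}(x,t;\lambda)=\sum_{r=1}^{q}\frac{\chi(r)\lambda^{r}te^{(r+x)t}}{\lambda^{q}e^{qt}-1},
\]
so that $F_{\chi}(x,t;\lambda)=\sum_{n\ge0}\beta_{n,\chi}(x,\lambda)t^{n}/n!$ for $|t+\log\lambda|<2\pi/q$. All identities below are between power series in $t$ that converge in a neighbourhood of $t=0$ inside this disc. Comparing coefficients is therefore legitimate. Note that $t=0$ lies in the disc when $|\log\lambda|<2\pi/q$. Otherwise $F_\chi$ is still analytic at $t=0$ provided the numerator vanishes at any pole, and the expansion is understood as the Taylor expansion there, exactly as in \eqref{eq1.15}.

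For \eqref{eq2.10}, observe that $x$ enters $F_{\chi}$ only through the factor $e^{xt}$. Hence $F_{\chi}(x+y,t;\lambda)=F_{\chi}(x,t;\lambda)e^{yt}$. Expand both sides: the left is $\sum_n\beta_{n,\chi}(x+y,\lambda)t^n/n!$, and the right is the Cauchy product of $\sum_k\beta_{k,\chi}(x,\lambda)t^k/k!$ with $\sum_j y^jt^j/j!$. Equating the coefficients of $t^n/n!$ gives the binomial convolution \eqref{eq2.10} for every $n\in\mathbb{N}_0$.

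For \eqref{eq2.11}, compute $\lambda^{q}F_{\chi}(x+q,t;\lambda)-F_{\chi}(x,t;\lambda)$. Using $\lambda^{q}e^{(x+q+r)t}-e^{(x+r)t}=e^{(x+r)t}(\lambda^{q}e^{qt}-1)$, the denominator cancels term by term. This leaves
\[
\sum_{r=1}^{q}\chi(r)\lambda^{r}te^{(r+x)t}=\sum_{n\ge1}\Bigl(n\sum_{r=1}^{q}\chi(r)\lambda^{r}(x+r)^{n-1}\Bigr)\frac{t^{n}}{n!},
\]
obtained by expanding $te^{(r+x)t}=\sum_{m\ge0}(x+r)^m t^{m+1}/m!$ and reindexing with $n=m+1$. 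On the other side, the coefficient of $t^n/n!$ is $\lambda^q\beta_{n,\chi}(x+q,\lambda)-\beta_{n,\chi}(x,\lambda)$. Comparing coefficients for $n\ge1$ yields \eqref{eq2.11}. The constant term gives $0=0$, consistent with $\beta_{0,\chi}(x,\lambda)$ being independent of $x$ by \eqref{eq2.10}.

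The only point needing care is justifying coefficient comparison near $t=0$, i.e.\ that both sides are analytic at the origin. This is immediate, since the difference in the second computation is entire and the first identity is a product of analytic functions.
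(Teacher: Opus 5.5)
Your proof is correct and is essentially the paper's argument: the paper likewise uses $F_\chi(x+y,t;\lambda)=e^{yt}F_\chi(x,t;\lambda)$ and the cancellation $\lambda^qF_\chi(x+q,t;\lambda)-F_\chi(x,t;\lambda)=t\sum_{r=1}^{q}\chi(r)\lambda^{r}e^{(r+x)t}$, then compares coefficients of $t^n/n!$. Your analyticity remark is more care than the paper takes, but the ``provided'' is unnecessary: $F_\chi$ is always analytic at $t=0$, since $\lambda^qe^{qt}-1$ is nonzero there when $\lambda^q\neq1$, and when $\lambda^q=1$ its simple zero at $t=0$ is cancelled by the factor $t$.
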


\begin{proof}
Observe that
\begin{equation*}
\sum_{r=1}^{q}\frac{\chi(r)\lambda^{r}te^{(r+x+y)t}}{\lambda^{q}e^{qt}-1}=e^{yt}\sum_{r=1}^{q}\frac{\chi(r)\lambda^{r}te^{(r+x)t}}{\lambda^{q}e^{qt}-1},
\end{equation*}
and
\begin{equation*}
\lambda^{q}\sum_{r=1}^{q}\frac{\chi(r)\lambda^{r}te^{(r+x+q)t}}{\lambda^{q}e^{qt}-1}-\sum_{r=1}^{q}\frac{\chi(r)\lambda^{r}te^{(r+x)t}}{\lambda^{q}e^{qt}-1}=t\sum_{r=1}^{q}\chi(r)\lambda^{r}e^{(r+x)t}.
\end{equation*}
Applying the Taylor series of $e^{t}$ and the Cauchy product to the above two identities, we obtain from \eqref{eq2.1} that
\begin{equation*}
\sum_{n=0}^{\infty}\beta_{n,\chi}(x+y,\lambda)\frac{t^{n}}{n!}=\sum_{n=0}^{\infty}\biggl(\sum_{k=0}^{n}\binom{n}{k}\beta_{k,\chi}(x,\lambda)y^{n-k}\biggl)\frac{t^{n}}{n!},
\end{equation*}
and
\begin{equation*}
\lambda^{q}\sum_{n=0}^{\infty}\beta_{n,\chi}(x+q,\lambda)\frac{t^{n}}{n!}-\sum_{n=0}^{\infty}\beta_{n,\chi}(x,\lambda)\frac{t^{n}}{n!}
=\sum_{n=0}^{\infty}\biggl(\sum_{r=1}^{q}\chi(r)\lambda^{r}(x+r)^{n}\biggl)\frac{t^{n+1}}{n!}.
\end{equation*}
Hence, the desired results follow when comparing the coefficients of $t^{n}/n!$.
\end{proof}

We also have the symmetric distributions of the generalized Apostol-Bernoulli polynomials.

\begin{proposition}\label{pro2.3} Let $q\in\mathbb{N}$, $n\in\mathbb{N}_{0}$ and $\lambda\in\mathbb{C}\setminus\{0\}$. Let $\chi$ be a Dirichlet character modulo $q$. If $q=1$, then we have
\begin{equation}\label{eq2.12}
\beta_{n,\chi}(-x,\lambda)=(-1)^{n}\beta_{n}(x,\lambda^{-1}),
\end{equation}
and if $q\geqslant2$, then we have
\begin{equation}\label{eq2.13}
\beta_{n,\chi}(-x,\lambda)=(-1)^{n}\beta_{n,\chi^{-}}(x,\lambda^{-1}),
\end{equation}
where $\chi^{-}$ denotes $\chi^{-}(r)=\chi(-r)$ for $r\in\mathbb{Z}$.
\end{proposition}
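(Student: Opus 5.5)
We compare generating functions. By \eqref{eq2.1}, $\beta_{n,\chi}(-x,\lambda)$ has generating function
\begin{equation*}
F(t):=\sum_{r=1}^{q}\frac{\chi(r)\lambda^{r}te^{(r-x)t}}{\lambda^{q}e^{qt}-1},
\end{equation*}
and $(-1)^{n}$ times the target coefficients are the Taylor coefficients of the target generating function evaluated at $-t$. So it suffices to show that $F(t)$ equals the generating function of the right-hand side with $t$ replaced by $-t$.

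\emph{Case $q\geqslant 2$.} Here the target generating function, at $-t$, is
\begin{equation*}
\sum_{r=1}^{q}\frac{\chi(-r)\lambda^{-r}(-t)e^{-(r+x)t}}{\lambda^{-q}e^{-qt}-1}.
\end{equation*}
\begin{itemize}
\item Multiply numerator and denominator by $\lambda^{q}e^{qt}$. The denominator becomes $1-\lambda^{q}e^{qt}$, which absorbs the sign, and the sum becomes $\sum_{r=1}^{q}\chi(-r)\lambda^{q-r}te^{(q-r-x)t}/(\lambda^{q}e^{qt}-1)$.
\item Reindex with $r'=q-r$, so $r'$ runs over $0,\dots,q-1$, and use periodicity: $\chi(-r)=\chi(r'-q)=\chi(r')$.
\item The range $\{0,\dots,q-1\}$ differs from $\{1,\dots,q\}$ only by swapping $r'=0$ for $r'=q$. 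Both terms vanish, since $\chi(0)=\chi(q)=0$ when $q\geqslant2$.
\end{itemize}
The result is exactly $F(t)$, which gives \eqref{eq2.13}.

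\emph{Case $q=1$.} Here $\chi\equiv1$, and $F(t)=\lambda te^{(1-x)t}/(\lambda e^{t}-1)$, consistent with \eqref{eq2.2}. By \eqref{eq1.15}, the generating function of $\beta_{n}(x,\lambda^{-1})$ at $-t$ is
\begin{equation*}
\frac{-te^{-xt}}{\lambda^{-1}e^{-t}-1}=\frac{\lambda te^{(1-x)t}}{\lambda e^{t}-1}=F(t).
\end{equation*}
This proves \eqref{eq2.12}. This case is the reason the statement splits. The reindexing above would need the boundary terms $r'=0$ and $r'=q$ to cancel, but $\chi(0)=1$ when $q=1$, and the result is instead phrased with the ordinary Apostol--Bernoulli polynomials.

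Comparing coefficients of $t^{n}/n!$ on both sides completes the proof in each case.

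The only delicate point is the branch convention for $\lambda^{z}=e^{z\log\lambda}$. If $\lambda$ is a negative real number, then $\log(\lambda^{-1})\neq-\log\lambda$. However, only integer powers $\lambda^{\pm r}$ and $\lambda^{\pm q}$ occur in the generating functions, and these are independent of the branch. The two sides are therefore equal as meromorphic functions of $t$, hence have equal Taylor coefficients at $t=0$. So the restriction on the disc of convergence in \eqref{eq2.1} causes no problem.
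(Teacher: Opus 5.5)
Your proof is correct and follows essentially the same route as the paper. Both arguments compare generating functions via the reindexing $r\mapsto q-r$, use $\chi(0)=\chi(q)=0$ to shift the summation range when $q\geqslant 2$, and handle $q=1$ by a direct computation; the paper simply runs the manipulation in the opposite direction, starting from the generating function of $\beta_{n,\chi}(-x,\lambda)$. Your remark that only integer powers of $\lambda$ occur, so the branch convention for $\lambda^{-1}$ is harmless, is a useful clarification that the paper leaves implicit.
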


\begin{proof}
Clearly, the following identities hold:
\begin{eqnarray*}
\sum_{r=1}^{q}\frac{\chi(r)\lambda^{r}te^{(r-x)t}}{\lambda^{q}e^{qt}-1}
&=&\sum_{r=1}^{q}\frac{\chi(r)\lambda^{r}te^{(-r+x)(-t)}}{\lambda^{q}e^{qt}-1}\nonumber\\
&=&\sum_{r=0}^{q-1}\frac{\chi(q-r)\lambda^{q-r}te^{(-q+r+x)(-t)}}{\lambda^{q}e^{qt}-1}\nonumber\\
&=&\sum_{r=0}^{q-1}\frac{\chi(-r)\lambda^{-r}(-t)e^{(r+x)(-t)}}{\lambda^{-q}e^{q(-t)}-1}.
\end{eqnarray*}
So if $q=1$, then we know from \eqref{eq1.15} and \eqref{eq2.1} that \eqref{eq2.12} holds; if $q\geqslant2$, then we have
\begin{equation*}
\sum_{r=1}^{q}\frac{\chi(r)\lambda^{r}te^{(r-x)t}}{\lambda^{q}e^{qt}-1}
=\sum_{r=1}^{q}\frac{\chi(-r)\lambda^{-r}(-t)e^{(r+x)(-t)}}{\lambda^{-q}e^{q(-t)}-1}.
\end{equation*}
Therefore, applying \eqref{eq2.1} to both sides of the above identity and then comparing the coefficients of $t^{n}/n!$, we obtain \eqref{eq2.13}.
\end{proof}

It follows easily from \eqref{eq2.2} and \eqref{eq2.3} that \eqref{eq2.12} reduces, respectively, to the symmetric distributions of the Apostol-Bernoulli polynomials and that of the Apostol-Euler polynomials. That is, for $n\in\mathbb{N}_{0}$ and $\lambda\in\mathbb{C}\setminus\{0\}$, we have
\begin{equation}\label{eq2.14}
\lambda\beta_{n}(1-x,\lambda)=(-1)^{n}\beta_{n}(x,\lambda^{-1}),
\end{equation}
and
\begin{equation}\label{eq2.15}
\lambda\varepsilon_{n}(1-x,\lambda)=(-1)^{n}\varepsilon_{n}(x,\lambda^{-1}).
\end{equation}
Moreover, we know from \eqref{eq2.2}, \eqref{eq2.3}, \eqref{eq2.11}, \eqref{eq2.12} and \eqref{eq2.14} that the Apostol-Bernoulli polynomials and the Apostol-Euler polynomials satisfy the difference equations:
\begin{equation}\label{eq2.16}
\lambda\beta_{n}(x+1,\lambda)-\beta_{n}(x,\lambda)=nx^{n-1}\quad(n\in\mathbb{N},\lambda\in\mathbb{C}\setminus\{0\}),
\end{equation}
and
\begin{equation}\label{eq2.17}
\lambda\varepsilon_{n}(x+1,\lambda)+\varepsilon_{n}(x,\lambda)=2x^{n}\quad(n\in\mathbb{N}_{0},\lambda\in\mathbb{C}\setminus\{0\}).
\end{equation}
Similarly, using \eqref{eq2.11}, \eqref{eq2.12} and \eqref{eq2.13}, we can also obtain the symmetric distributions and difference equations for the generalized Bernoulli polynomials and for the generalized Euler polynomials of the first and second kinds.

The following symmetric identity is more general than the multiplication formulas of the Bernoulli polynomials and the Euler polynomials stated in \cite[Chapter 2]{norlund}.

\begin{proposition}\label{pro2.4} Let $q,a,b\in\mathbb{N}$ and $\lambda,\mu\in\mathbb{C}\setminus\{0\}$ with $\lambda^{a}=\mu^{b}$. Let $\chi$ be a Dirichlet character modulo $q$. Then, for $n\in\mathbb{N}_{0}$,
\begin{equation}\label{eq2.18}
a^{n-1}\sum_{k=1}^{aq}\chi(k)\lambda^{k}\beta_{n,\chi}\biggl(bx+\frac{bk}{a},\mu\biggl)=b^{n-1}\sum_{k=1}^{bq}\chi(k)\mu^{k}\beta_{n,\chi}\biggl(ax+\frac{ak}{b},\lambda\biggl).
\end{equation}
\end{proposition}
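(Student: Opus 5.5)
The plan is to prove \eqref{eq2.18} with generating functions. I will show that the exponential generating functions of the two sides coincide, and the reason will be that the combined generating function is visibly symmetric under the swap $(a,\lambda)\leftrightarrow(b,\mu)$. All powers $\lambda^{k},\mu^{k}$ occurring here have integer exponents, so they are unambiguous. Moreover $\lambda^{a}=\mu^{b}$ gives $\lambda^{aq}=\mu^{bq}$.

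\emph{Step 1: rewrite the left side.} Replace $t$ by $at$ and $x$ by $bx+bk/a$ in \eqref{eq2.1}, with parameter $\mu$. This gives
\begin{equation*}
\sum_{n=0}^{\infty}a^{n-1}\beta_{n,\chi}\Bigl(bx+\frac{bk}{a},\mu\Bigr)\frac{t^{n}}{n!}
=\sum_{r=1}^{q}\frac{\chi(r)\mu^{r}\,t\,e^{(r+bx)at+bkt}}{\mu^{q}e^{qat}-1}.
\end{equation*}
Next multiply by $\chi(k)\lambda^{k}$ and sum over $1\leqslant k\leqslant aq$. Write $k=s+qj$ with $1\leqslant s\leqslant q$ and $0\leqslant j\leqslant a-1$, and use the periodicity of $\chi$. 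The $k$-sum then factors as a geometric sum:
\begin{equation*}
\sum_{k=1}^{aq}\chi(k)\lambda^{k}e^{bkt}=\sum_{s=1}^{q}\chi(s)\lambda^{s}e^{bst}\cdot\frac{\lambda^{aq}e^{abqt}-1}{\lambda^{q}e^{bqt}-1}.
\end{equation*}
Hence the generating function of the left side of \eqref{eq2.18} equals
\begin{equation*}
t\,e^{abxt}\,\frac{\bigl(\lambda^{aq}e^{abqt}-1\bigr)\Bigl(\sum_{r=1}^{q}\chi(r)\mu^{r}e^{art}\Bigr)\Bigl(\sum_{s=1}^{q}\chi(s)\lambda^{s}e^{bst}\Bigr)}{\bigl(\mu^{q}e^{aqt}-1\bigr)\bigl(\lambda^{q}e^{bqt}-1\bigr)}.
\end{equation*}

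\emph{Step 2: symmetry.} Carry out the same computation for the right side with $(a,\lambda)$ and $(b,\mu)$ interchanged. The result is the same expression, except that the factor $\lambda^{aq}e^{abqt}-1$ becomes $\mu^{bq}e^{abqt}-1$. By the hypothesis $\lambda^{aq}=\mu^{bq}$ these two factors agree, so the two generating functions are identical. Comparing coefficients of $t^{n}/n!$ then yields \eqref{eq2.18}.

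\emph{Main obstacle: justifying the coefficient comparison.} The expansions are valid only in suitable neighbourhoods of $t=0$, namely $|at+\log\mu|<2\pi/q$ and similarly for the other side. The geometric-sum step also needs $\lambda^{q}e^{bqt}\neq1$. Both sides are meromorphic in $t$, and each has a removable singularity at $t=0$: if $\mu^{q}=1$ or $\lambda^{q}=1$, the factor $t$ cancels the simple zero of the corresponding denominator. So I will treat the identity in Step 1 as an identity of meromorphic functions, valid on a punctured disc around $0$ and extended to $t=0$ by removability. The Taylor coefficients at $0$ are then determined by the function alone, which settles the comparison regardless of which region of convergence was used to define each expansion.
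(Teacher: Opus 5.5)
Your proof is correct and is essentially the paper's own argument: the paper likewise rescales $t\mapsto at$ in \eqref{eq2.1}, factors $\sum_{k=1}^{aq}\chi(k)\lambda^{k}e^{bkt}$ as a geometric sum times $\sum_{k=1}^{q}\chi(k)\lambda^{k}e^{bkt}$, notes that the resulting expression is symmetric under $(a,\lambda)\leftrightarrow(b,\mu)$ because $\lambda^{aq}=\mu^{bq}$, and compares coefficients of $t^{n}/n!$. Your removability argument at $t=0$ supplies a justification the paper leaves implicit; one small correction is that when $\lambda^{q}=\mu^{q}=1$, the double zero of the factored denominator is cancelled by $t$ together with $\lambda^{aq}e^{abqt}-1$, not by $t$ alone, but removability holds anyway because the unfactored sum over $k$ is analytic at $0$.
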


\begin{proof}
It is easily seen that
\begin{eqnarray}\label{eq2.19}
&&\frac{1}{a}\sum_{k=1}^{aq}\chi(k)\lambda^{k}\sum_{r=1}^{q}\frac{\chi(r)\mu^{r}ate^{(r+bx+\frac{bk}{a})at}}{\mu^{q}e^{qat}-1}\nonumber\\
&&\qquad=\frac{te^{abxt}}{\mu^{q}e^{qat}-1}\sum_{k=1}^{aq}\chi(k)\lambda^{k}e^{btk}\sum_{r=1}^{q}\chi(r)\mu^{r}e^{atr},
\end{eqnarray}
and
\begin{eqnarray}\label{eq2.20}
\sum_{k=1}^{aq}\chi(k)\lambda^{k}e^{btk}&=&\sum_{k=1}^{q}\chi(k)(\lambda e^{bt})^{k}+\sum_{k=1}^{q}\chi(q+k)(\lambda e^{bt})^{q+k}+\cdots\nonumber\\
&&+\sum_{k=1}^{q}\chi\bigl((a-1)q+k\bigl)(\lambda e^{bt})^{(a-1)q+k}\nonumber\\
&=&\bigl(1+\lambda^{q}e^{qbt}+\cdots+(\lambda^{q}e^{qbt})^{a-1}\bigl)\sum_{k=1}^{q}\chi(k)\lambda^{k}e^{btk}\nonumber\\
&=&\frac{\lambda^{aq}e^{abqt}-1}{\lambda^{q}e^{qbt}-1}\sum_{k=1}^{q}\chi(k)\lambda^{k}e^{btk}.
\end{eqnarray}
Hence, inserting \eqref{eq2.20} into \eqref{eq2.19}, we have
\begin{eqnarray}\label{eq2.21}
&&\frac{1}{a}\sum_{k=1}^{aq}\chi(k)\lambda^{k}\sum_{r=1}^{q}\frac{\chi(r)\mu^{r}ate^{(r+bx+\frac{bk}{a})at}}{\mu^{q}e^{qat}-1}\nonumber\\
&&\qquad=\frac{te^{abxt}(\lambda^{aq}e^{abqt}-1)}{(\mu^{q}e^{qat}-1)(\lambda^{q}e^{qbt}-1)}\sum_{k=1}^{q}\chi(k)\lambda^{k}e^{btk}\sum_{r=1}^{q}\chi(r)\mu^{r}e^{atr}.
\end{eqnarray}
After swapping $a$ and $b$, $\lambda$ and $\mu$ in \eqref{eq2.21}, we obtain
\begin{eqnarray}\label{eq2.22}
&&\frac{1}{b}\sum_{k=1}^{bq}\chi(k)\mu^{k}\sum_{r=1}^{q}\frac{\chi(r)\lambda^{r}bte^{(r+ax+\frac{ak}{b})bt}}{\lambda^{q}e^{qbt}-1}\nonumber\\
&&\qquad=\frac{te^{abxt}(\mu^{bq}e^{abqt}-1)}{(\lambda^{q}e^{qbt}-1)(\mu^{q}e^{qat}-1)}\sum_{k=1}^{q}\chi(k)\mu^{k}e^{atk}\sum_{r=1}^{q}\chi(r)\lambda^{r}e^{btr}.
\end{eqnarray}
Since $\lambda^{a}=\mu^{b}$, by \eqref{eq2.21} and \eqref{eq2.22} we have
\begin{eqnarray}\label{eq2.23}
&&\frac{1}{a}\sum_{k=1}^{aq}\chi(k)\lambda^{k}\sum_{r=1}^{q}\frac{\chi(r)\mu^{r}ate^{(r+bx+\frac{bk}{a})at}}{\mu^{q}e^{qat}-1}\nonumber\\
&&\qquad=\frac{1}{b}\sum_{k=1}^{bq}\chi(k)\mu^{k}\sum_{r=1}^{q}\frac{\chi(r)\lambda^{r}bte^{(r+ax+\frac{ak}{b})bt}}{\lambda^{q}e^{qbt}-1}.
\end{eqnarray}
Thus, by applying \eqref{eq2.1} to both sides of \eqref{eq2.23} and then comparing the coefficients of $t^{n}/n!$, we get \eqref{eq2.18} and finish the proof of Proposition \ref{pro2.4}.
\end{proof}

As applications of Proposition \ref{pro2.4}, we have the following results.

\begin{corollary}\label{cor2.5} Let $a,b\in\mathbb{N}$ and $\lambda,\mu\in\mathbb{C}\setminus\{0\}$ with $\lambda^{a}=\mu^{b}$. Then, for $n\in\mathbb{N}_{0}$,
\begin{equation}\label{eq2.24}
a^{n-1}\sum_{k=0}^{a-1}\lambda^{k}\beta_{n}\biggl(bx+\frac{bk}{a},\mu\biggl)=b^{n-1}\sum_{k=0}^{b-1}\mu^{k}\beta_{n}\biggl(ax+\frac{ak}{b},\lambda\biggl),
\end{equation}
and
\begin{equation}\label{eq2.25}
a^{n}\sum_{k=0}^{a-1}\lambda^{k}\varepsilon_{n}\biggl(bx+\frac{bk}{a},-\mu\biggl)=b^{n}\sum_{k=0}^{b-1}\mu^{k}\varepsilon_{n}\biggl(ax+\frac{ak}{b},-\lambda\biggl).
\end{equation}
\end{corollary}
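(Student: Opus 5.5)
The plan is to specialize Proposition \ref{pro2.4} to $q=1$, where $\chi$ is the trivial character, and then convert everything into Apostol-Bernoulli and Apostol-Euler polynomials using \eqref{eq2.2} and \eqref{eq2.3}.

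For \eqref{eq2.24}, take $q=1$ in \eqref{eq2.18}. Then $\chi(k)=1$ for all $k$, and
\begin{equation*}
a^{n-1}\sum_{k=1}^{a}\lambda^{k}\beta_{n,\chi}\Bigl(bx+\frac{bk}{a},\mu\Bigr)=b^{n-1}\sum_{k=1}^{b}\mu^{k}\beta_{n,\chi}\Bigl(ax+\frac{ak}{b},\lambda\Bigr).
\end{equation*}
By \eqref{eq2.2}, $\beta_{n,\chi}(y,\mu)=\mu\beta_{n}(y+1,\mu)$. On the left, $bx+bk/a+1=b(x+1/b)+b(k-1)/a+b/a$, which does not immediately reindex cleanly. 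It is cleaner to replace $x$ by $x-1/b-1/a$ from the outset, or simply to recompute from generating functions. So I would redo the argument of Proposition \ref{pro2.4} directly with \eqref{eq1.15}. The left side, after multiplying by $t^{n}/n!$ and summing, is
\begin{equation*}
\frac{1}{a}\sum_{k=0}^{a-1}\lambda^{k}\frac{at\,e^{(bx+bk/a)at}}{\mu e^{at}-1}=\frac{t\,e^{abxt}}{\mu e^{at}-1}\cdot\frac{\lambda^{a}e^{abt}-1}{\lambda e^{bt}-1}.
\end{equation*}
Here the geometric sum $\sum_{k=0}^{a-1}(\lambda e^{bt})^{k}$ was summed. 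The right side gives the same expression with $(a,\lambda)\leftrightarrow(b,\mu)$, that is, $\frac{t e^{abxt}}{\lambda e^{bt}-1}\cdot\frac{\mu^{b}e^{abt}-1}{\mu e^{at}-1}$. These agree because $\lambda^{a}=\mu^{b}$. Comparing coefficients of $t^{n}/n!$ yields \eqref{eq2.24}.

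For \eqref{eq2.25}, I would proceed in the same way with \eqref{eq2.4}. The generating function of the left side is
\begin{equation*}
\sum_{k=0}^{a-1}\lambda^{k}\frac{2e^{(bx+bk/a)at}}{-\mu e^{at}+1}=\frac{2e^{abxt}}{1-\mu e^{at}}\cdot\frac{1-\lambda^{a}e^{abt}}{1-\lambda e^{bt}},
\end{equation*}
with the variable $at$ in place of $t$. Coefficients in $at$ therefore produce the factor $a^{n}$. The right side gives the symmetric counterpart, and equality again follows from $\lambda^{a}=\mu^{b}$. Comparing coefficients gives \eqref{eq2.25}. Equivalently, one could invoke \eqref{eq2.3} together with \eqref{eq2.24} at index $n+1$ with $\lambda,\mu$ replaced by $-\lambda,-\mu$. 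The difficulty there is that $(-\lambda)^{a}=(-\mu)^{b}$ fails when $a,b$ have different parities, so the direct generating-function route is safer.

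The main obstacle is bookkeeping: keeping the shift of the argument by $1$ from \eqref{eq2.2}, and the sign conventions for $-\mu,-\lambda$, consistent. The generating-function computation avoids both issues, and convergence is handled by working with formal power series near $t=0$.
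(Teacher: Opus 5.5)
Your proof is correct, but it takes a different route from the paper's. The paper does not return to generating functions here. It sets $q=1$ in \eqref{eq2.18}, reindexes $k\mapsto a-k$ and cancels $\lambda^{a}=\mu^{b}$ to obtain \eqref{eq2.26}. It then converts $\beta_{n,\chi}$ into $\beta_{n}$ with the reflection formula \eqref{eq2.12}, and finally substitutes $\lambda^{-1},\mu^{-1},-x-1$ for $\lambda,\mu,x$. For \eqref{eq2.25} it takes \eqref{eq2.26} at index $n+1$ and uses \eqref{eq2.3} and \eqref{eq2.15} in the same way. You instead rerun the geometric-sum argument of Proposition~\ref{pro2.4} directly on \eqref{eq1.15} and \eqref{eq2.4}. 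Your version is self-contained and avoids the reflection formulas and the parameter inversion; the paper's version shows the corollary as a literal specialization of the character identity. The shortcut you set aside also works, and it is quicker than the paper's reflection step. By \eqref{eq2.2} and the shift $k=j+1$, the two sides of \eqref{eq2.18} at $q=1$ equal $\lambda\mu$ times the two sides of \eqref{eq2.24} evaluated at $x+\frac{1}{a}+\frac{1}{b}$. This is because $bx+\frac{b}{a}+1=b\bigl(x+\frac{1}{a}+\frac{1}{b}\bigr)$ and $ax+\frac{a}{b}+1=a\bigl(x+\frac{1}{a}+\frac{1}{b}\bigr)$.

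Your side remark on \eqref{eq2.25} is mistaken, though harmless since you do not rely on it. From $\frac{2e^{yt}}{1-\mu e^{t}}=-\frac{2}{t}\cdot\frac{te^{yt}}{\mu e^{t}-1}$ (equivalently, \eqref{eq2.2} together with \eqref{eq2.3}) one gets $\varepsilon_{n}(y,-\mu)=-\frac{2}{n+1}\beta_{n+1}(y,\mu)$. So \eqref{eq2.25} is exactly $-\frac{2}{n+1}$ times \eqref{eq2.24} at index $n+1$ with the \emph{same} $\lambda,\mu$. No replacement by $-\lambda,-\mu$ and no parity condition is involved; indeed your two generating functions differ precisely by the factor $-2/t$. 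This reduction also handles the degenerate case $\lambda=1$ or $\mu=1$, where $\frac{2e^{yt}}{1-e^{t}}$ has a pole at $t=0$. In that case your comparison is not one of power series but of Laurent coefficients, with $\varepsilon_{n}(y,-1)$ read as $-2B_{n+1}(y)/(n+1)$, which is what \eqref{eq2.3} tacitly assumes.
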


\begin{proof}
Setting $q=1$ in \eqref{eq2.18} and noting that $\lambda^{a}=\mu^{b}$, we obtain
\begin{equation}\label{eq2.26}
a^{n-1}\sum_{k=0}^{a-1}\lambda^{-k}\beta_{n,\chi}\biggl(bx+b-\frac{bk}{a},\mu\biggl)=b^{n-1}\sum_{k=0}^{b-1}\mu^{-k}\beta_{n,\chi}\biggl(ax+a-\frac{ak}{b},\lambda\biggl).
\end{equation}
Hence, from \eqref{eq2.12} and \eqref{eq2.26}, it follows that
\begin{equation*}
a^{n-1}\sum_{k=0}^{a-1}\lambda^{-k}\beta_{n}\biggl(-bx-b+\frac{bk}{a},\mu^{-1}\biggl)=b^{n-1}\sum_{k=0}^{b-1}\mu^{-k}\beta_{n}\biggl(-ax-a+\frac{ak}{b},\lambda^{-1}\biggl).
\end{equation*}
Replacing $\lambda$ by $\lambda^{-1}$, $\mu$ by $\mu^{-1}$ and $x$ by $-x-1$ in the above identity, we get \eqref{eq2.24}. If we replace $n$ by $n+1$ in \eqref{eq2.26} and use \eqref{eq2.3}, then we have
\begin{equation*}
-\mu a^{n}\sum_{k=0}^{a-1}\lambda^{-k}\varepsilon_{n}\biggl(bx+b+1-\frac{bk}{a},-\mu\biggl)=-\lambda b^{n}\sum_{k=0}^{b-1}\mu^{-k}\varepsilon_{n}\biggl(ax+a+1-\frac{ak}{b},-\lambda\biggl).
\end{equation*}
It follows from \eqref{eq2.15} that
\begin{equation*}
a^{n}\sum_{k=0}^{a-1}\lambda^{-k}\varepsilon_{n}\biggl(-bx-b+\frac{bk}{a},-\mu^{-1}\biggl)=b^{n}\sum_{k=0}^{b-1}\mu^{-k}\varepsilon_{n}\biggl(-ax-a+\frac{ak}{b},-\lambda^{-1}\biggl).
\end{equation*}
Thus, \eqref{eq2.25} follows by replacing $\lambda,\mu,x$ with $\lambda^{-1}, \mu^{-1},-x-1$, respectively.
\end{proof}

Corollary \ref{cor2.5} above was also obtained earlier by Luo \cite{luo1} in 2009. In particular, setting $\lambda=\mu=b=1$ in \eqref{eq2.24} yields that for $n\in\mathbb{N}_{0}$ and $a\in\mathbb{N}$,
\begin{equation}\label{eq2.27}
a^{n-1}\sum_{k=0}^{a-1}B_{n}\biggl(x+\frac{k}{a}\biggl)=B_{n}(ax),
\end{equation}
and setting $\lambda=\mu=-1$ and $b=1$ in \eqref{eq2.25} yields that for $n\in\mathbb{N}_{0}$ and $a\in\mathbb{N}$ with $2\nmid a$,
\begin{equation}\label{eq2.28}
a^{n}\sum_{k=0}^{a-1}(-1)^{k}E_{n}\biggl(x+\frac{k}{a}\biggl)=E_{n}(ax).
\end{equation}
In addition, setting $\lambda=-1$ and $\mu=b=1$ in \eqref{eq2.24} also yields that for $n,a\in\mathbb{N}$ with $2\mid a$,
\begin{equation}\label{eq2.29}
a^{n-1}\sum_{k=0}^{a-1}(-1)^{k}B_{n}\biggl(x+\frac{k}{a}\biggl)=-\frac{nE_{n-1}(ax)}{2}.
\end{equation}
It is worth mentioning that formula \eqref{eq2.27} is attributed to Raabe \cite{raabe}, and is usually called Raabe's multiplication formula. In particular, Howard \cite{howard} in 1995 used Raabe's multiplication formula \eqref{eq2.27} to prove the well-known Staudt-Clausen theorem and the theorems of Carlitz, Frobenius, and Ramanujan.

\begin{corollary}\label{cor2.6} Let $q,a,b\in\mathbb{N}$ and $n\in\mathbb{N}_{0}$. Let $\chi$ be a Dirichlet character modulo $q$. Then
\begin{equation}\label{eq2.30}
a^{n-1}\sum_{k=1}^{aq}\chi(k)B_{n}\biggl(bx+\frac{bk}{a},\chi\biggl)=b^{n-1}\sum_{k=1}^{bq}\chi(k)B_{n}\biggl(ax+\frac{ak}{b},\chi\biggl).
\end{equation}
Furthermore, if $e^{\frac{\pi \mathrm{i}a}{q}}=e^{\frac{\pi \mathrm{i}b}{q}}$, then
\begin{equation}\label{eq2.31}
a^{n}\sum_{k=1}^{aq}\chi(k)e^{\frac{\pi \mathrm{i}k}{q}}e_{n}\biggl(bx+\frac{bk}{a},\chi\biggl)=b^{n}\sum_{k=1}^{bq}\chi(k)e^{\frac{\pi \mathrm{i}k}{q}}e_{n}\biggl(ax+\frac{ak}{b},\chi\biggl),
\end{equation}
and if $(-1)^{a}=(-1)^{b}$ and $2\nmid q$, then
\begin{equation}\label{eq2.32}
a^{n}\sum_{k=1}^{aq}\chi(k)(-1)^{k}E_{n}\biggl(bx+\frac{bk}{a},\chi\biggl)=b^{n}\sum_{k=1}^{bq}\chi(k)(-1)^{k}E_{n}\biggl(ax+\frac{ak}{b},\chi\biggl).
\end{equation}
\end{corollary}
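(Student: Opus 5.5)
All three identities are specializations of Proposition \ref{pro2.4}: choose $\lambda$ and $\mu$ so that the hypothesis $\lambda^{a}=\mu^{b}$ holds, then translate $\beta_{n,\chi}$ into the classical families via the identifications after Definition \ref{def2.1}.

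For \eqref{eq2.30}, take $\lambda=\mu=1$. Then $\lambda^{a}=\mu^{b}$ holds trivially and the weights $\lambda^{k},\mu^{k}$ are all $1$. Since $\beta_{n,\chi}(x,1)=B_{n}(x,\chi)$ for every $n\in\mathbb{N}_{0}$, \eqref{eq2.18} becomes \eqref{eq2.30} verbatim.

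For \eqref{eq2.31}, take $\lambda=\mu=e^{\frac{\pi\mathrm{i}}{q}}$.
\begin{itemize}
\item The hypothesis $\lambda^{a}=\mu^{b}$ is exactly $e^{\frac{\pi\mathrm{i}a}{q}}=e^{\frac{\pi\mathrm{i}b}{q}}$. Here $\lambda^{z}=e^{z\log\lambda}$ with $\log\lambda=\pi\mathrm{i}/q$, which is principal for $q\geqslant1$, so $\lambda^{k}=e^{\frac{\pi\mathrm{i}k}{q}}$.
\item Apply \eqref{eq2.18} with $n+1$ in place of $n$, which gives the powers $a^{n}$ and $b^{n}$.
\item Multiply both sides by $-2/(n+1)$.
\item Use \eqref{eq2.6}, i.e.\ $-\frac{2}{n+1}\beta_{n+1,\chi}(y,e^{\frac{\pi\mathrm{i}}{q}})=e_{n}(y,\chi)$, for every argument $y$ appearing on both sides. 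This yields \eqref{eq2.31}.
\end{itemize}

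For \eqref{eq2.32}, take $\lambda=\mu=-1$ and assume $2\nmid q$, so that \eqref{eq2.8} applies.
\begin{itemize}
\item The hypothesis $\lambda^{a}=\mu^{b}$ becomes $(-1)^{a}=(-1)^{b}$.
\item Check that $\lambda^{k}=e^{k\pi\mathrm{i}}=(-1)^{k}$, since $\log(-1)=\pi\mathrm{i}$ under the principal branch.
\item Replace $n$ by $n+1$ in \eqref{eq2.18}, multiply by $-2/(n+1)$, and use \eqref{eq2.8}, $-\frac{2}{n+1}\beta_{n+1,\chi}(y,-1)=E_{n}(y,\chi)$. This yields \eqref{eq2.32}.
\end{itemize}

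The only delicate point is the bookkeeping of powers $\lambda^{k}$ under the paper's convention $\lambda^{z}=e^{z\log\lambda}$. This is what guarantees that the weights in \eqref{eq2.18} literally match $e^{\frac{\pi\mathrm{i}k}{q}}$ and $(-1)^{k}$ in \eqref{eq2.31} and \eqref{eq2.32}.

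Proposition \ref{pro2.4} holds for $n\in\mathbb{N}_{0}$, so the shift $n\mapsto n+1$ is legitimate for all $n\in\mathbb{N}_{0}$. Nothing else is needed.
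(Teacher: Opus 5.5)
Your proposal is correct and is the paper's argument: specialize Proposition \ref{pro2.4} to $\lambda=\mu=1$, $\lambda=\mu=e^{\frac{\pi\mathrm{i}}{q}}$ and $\lambda=\mu=-1$, and in the last two cases shift $n\mapsto n+1$, multiply by $-2/(n+1)$, and invoke \eqref{eq2.6} and \eqref{eq2.8}. Your check that the weights $\lambda^{k}$ equal $e^{\frac{\pi\mathrm{i}k}{q}}$ and $(-1)^{k}$ under the principal-branch convention is a detail the paper leaves implicit.
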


\begin{proof}
Taking $\lambda=\mu=1$ in \eqref{eq2.18} gives \eqref{eq2.30}. Taking $\lambda=\mu=e^{\frac{\pi \mathrm{i}}{q}}$ in \eqref{eq2.18} and using \eqref{eq2.6} gives \eqref{eq2.31}. Taking $\lambda=\mu=-1$ in \eqref{eq2.18} and using \eqref{eq2.8} gives \eqref{eq2.32}.
\end{proof}

We next introduce the generalized Apostol-Bernoulli functions.

\begin{definition}\label{def2.7} Let $n\in\mathbb{N}_{0}$, $x\in\mathbb{R}$ and $\lambda\in\mathbb{C}\setminus\{0\}$. For a Dirichlet character $\chi$ modulo $q\in\mathbb{N}$, the generalized Apostol-Bernoulli functions $\overline{\beta}_{n,\chi}(x,\lambda)$ are defined by
\begin{equation}\label{eq2.33}
\overline{\beta}_{n,\chi}(x,\lambda)=
\beta_{n,\chi}(\{x\}_{\chi},\lambda)+\frac{1}{2}\delta_{1,n}\delta_{\mathbb{Z}}(x)\lambda^{-x}\chi(-x),
\end{equation}
where $\delta_{l,k}$ is the Kronecker delta-function, defined for $l,k\in\mathbb{N}_{0}$ by $\delta_{l,k}=1$ if $l=k$ and $0$ otherwise, and $\delta_{\mathbb{Z}}(x)=1$ if $x\in\mathbb{Z}$ and $0$ otherwise; and
\begin{equation}\label{eq2.34}
\beta_{n,\chi}(\{x\}_{\chi},\lambda)=q^{n-1}\sum_{k=1}^{q}\chi(k)\lambda^{k-q[\frac{x+k}{q}]}\beta_{n}\biggl(\biggl\{\frac{x+k}{q}\biggl\},\lambda^{q}\biggl),
\end{equation}
where $\{x\}$ denotes the fractional part of $x$ and $[x]$ denotes the integer part satisfying $\{x\}+[x]=x$.
\end{definition}

In particular, consider the case
$q=1$ in Definition \ref{def2.7}. This yields, respectively, the Apostol-Bernoulli functions $\overline{\beta}_{n}(x,\lambda)$ and the Apostol-Euler functions $\overline{\varepsilon}_{n}(x,\lambda)$, which are defined for $x\in\mathbb{R}$ and $\lambda\in\mathbb{C}\setminus\{0\}$ by
\begin{equation}\label{eq2.35}
\overline{\beta}_{n}(x,\lambda)=\lambda^{-[x]}\beta_{n}(\{x\},\lambda)+\frac{1}{2}\delta_{1,n}\delta_{\mathbb{Z}}(x)\lambda^{-x}\quad(n\in\mathbb{N}_{0}),
\end{equation}
and
\begin{equation}\label{eq2.36}
\overline{\varepsilon}_{n}(x,\lambda)=(-1)^{[x]}\lambda^{-[x]}\varepsilon_{n}(\{x\},\lambda)-\delta_{0,n}\delta_{\mathbb{Z}}(x)(-\lambda)^{-x}\quad(n\in\mathbb{N}_{0}).
\end{equation}
We also call the functions $\overline{B}_{n}(x)=\overline{\beta}_{n}(x,1)$ the Bernoulli functions, and $\overline{E}_{n}(x)=\overline{\varepsilon}_{n}(x,1)$ the Euler functions.

On the other hand, if we set $\lambda=1$, $\lambda=e^{\frac{\pi \mathrm{i}}{q}}$, and $\lambda=-1$ separately in Definition \ref{def2.7}, we obtain respectively the generalized Bernoulli functions $\overline{B}_{n}(x,\chi)$, the generalized Euler functions of the first kind $\overline{e}_{n}(x,\chi)$, and the generalized Euler functions of the second kind $\overline{E}_{n}(x,\chi)$, which are defined for $x\in\mathbb{R}$ and for a Dirichlet character $\chi$ modulo $q\in\mathbb{N}$ by
\begin{equation}\label{eq2.37}
\overline{B}_{n}(x,\chi)=B_{n}(\{x\}_{\chi},\chi)+\frac{1}{2}\delta_{1,n}\delta_{\mathbb{Z}}(x)\chi(-x)\quad(n\in\mathbb{N}_{0}),
\end{equation}
\begin{equation}\label{eq2.38}
\overline{e}_{n}(x,\chi)=e_{n}(\{x\},\chi)-\delta_{0,n}\delta_{\mathbb{Z}}(x)e^{-\frac{\pi \mathrm{i}x}{q}}\chi(-x)\quad(n\in\mathbb{N}_{0}),
\end{equation}
and
\begin{equation}\label{eq2.39}
\overline{E}_{n}(x,\chi)=E_{n}(\{x\},\chi)-\delta_{0,n}\delta_{\mathbb{Z}}(x)(-1)^{x}\chi(-x)\quad(n\in\mathbb{N}_{0},2\nmid q),
\end{equation}
where
\begin{equation*}
B_{n}(\{x\}_{\chi},\chi)=q^{n-1}\sum_{k=1}^{q}\chi(k)B_{n}\biggl(\biggl\{\frac{x+k}{q}\biggl\}\biggl),
\end{equation*}
\begin{equation*}
e_{n}(\{x\}_{\chi},\chi)=q^{n}\sum_{k=1}^{q}\chi(k)e^{\frac{\pi \mathrm{i}k}{q}-\pi\mathrm{i}[\frac{x+k}{q}]}E_{n}\biggl(\biggl\{\frac{x+k}{q}\biggl\}\biggl),
\end{equation*}
and
\begin{equation*}
E_{n}(\{x\}_{\chi},\chi)=q^{n}\sum_{k=1}^{q}\chi(k)(-1)^{k-q[\frac{x+k}{q}]}E_{n}\biggl(\biggl\{\frac{x+k}{q}\biggl\}\biggl).
\end{equation*}
Trivially, setting $q=1$ in \eqref{eq2.37} gives the Bernoulli functions, while the same substitution in either \eqref{eq2.38} or \eqref{eq2.39} yields the Euler functions.

It is easily seen that $\lambda^{x}\beta_{n,\chi}(\{x\}_{\chi},\lambda)$ is a periodic function on $\mathbb{R}$ of period $q$. It is continuous for
$n\geqslant2$, but for $n=1$ it has simple discontinuities at integers, satisfying that for $m\in\mathbb{Z}$ with $\gcd(m,q)=1$,
\begin{equation*}
\underset{\substack{x\rightarrow m\\ x>m}}{\beta_{1,\chi}(\{x\}_{\chi},\lambda)}=\beta_{1,\chi}(\{m\}_{\chi},\lambda),
\end{equation*}
and
\begin{equation*}
\underset{\substack{x\rightarrow m\\ x<m}}{\beta_{1,\chi}(\{x\}_{\chi},\lambda)}=\beta_{1,\chi}(\{m\}_{\chi},\lambda)+\lambda^{-m}\chi(-m),
\end{equation*}
which can be obtained by using \eqref{eq2.16} and the division algorithm stated in \cite[Theorem 1.14]{apostol4}.

It is interesting to point out that there exists a connection between $\beta_{n,\chi}(\{x\}_{\chi},\lambda)$ and $\beta_{n,\chi}(x,\lambda)$ as follows.

\begin{proposition}\label{pro2.8} Let $q,n\in\mathbb{N}$, $x\in\mathbb{R}$ and $\lambda\in\mathbb{C}\setminus\{0\}$. Let $\chi$ be a Dirichlet character modulo $q$. Then
\begin{equation}\label{eq2.40}
\beta_{n,\chi}(\{x\}_{\chi},\lambda)=\beta_{n,\chi}(x,\lambda)-n\underset{\substack{0\leqslant k\leqslant x\\ k\in\mathbb{Z}}}{\sum}\chi(-k)\lambda^{-k}(x-k)^{n-1}.
\end{equation}
\end{proposition}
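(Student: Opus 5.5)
The plan is to reduce everything to the $q=1$ difference equation \eqref{eq2.16}, using the expansion \eqref{eq2.5} of $\beta_{n,\chi}(x,\lambda)$ in terms of Apostol-Bernoulli polynomials and the definition \eqref{eq2.34} of $\beta_{n,\chi}(\{x\}_{\chi},\lambda)$. Put $\Lambda=\lambda^{q}$. For each $k\in\{1,\ldots,q\}$ write
\[
\frac{x+k}{q}=y_{k}+m_{k},\qquad y_{k}=\biggl\{\frac{x+k}{q}\biggr\},\quad m_{k}=\biggl[\frac{x+k}{q}\biggr].
\]
Then \eqref{eq2.5} reads $\beta_{n,\chi}(x,\lambda)=q^{n-1}\sum_{k=1}^{q}\chi(k)\lambda^{k}\beta_{n}(y_{k}+m_{k},\Lambda)$. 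The goal is to express $\beta_{n}(y_{k}+m_{k},\Lambda)$ through $\beta_{n}(y_{k},\Lambda)$.

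\textbf{Step 1.} Iterate \eqref{eq2.16}, written as $\beta_{n}(y+1,\Lambda)=\Lambda^{-1}\beta_{n}(y,\Lambda)+n\Lambda^{-1}y^{n-1}$. For $m\geqslant0$ a simple induction gives
\[
\beta_{n}(y+m,\Lambda)=\Lambda^{-m}\beta_{n}(y,\Lambda)+n\sum_{j=0}^{m-1}\Lambda^{-(m-j)}(y+j)^{n-1}.
\]
Inserting this into \eqref{eq2.5}, the first terms give $q^{n-1}\sum_{k}\chi(k)\lambda^{k-qm_{k}}\beta_{n}(y_{k},\Lambda)$. By \eqref{eq2.34} this is exactly $\beta_{n,\chi}(\{x\}_{\chi},\lambda)$.

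\textbf{Step 2.} Reindex the remaining double sum by the integer $l=q(m_{k}-j)-k$. Then
\[
y_{k}+j=\frac{x-l}{q},\qquad q^{n-1}(y_{k}+j)^{n-1}=(x-l)^{n-1},\qquad \lambda^{k}\Lambda^{-(m_{k}-j)}=\lambda^{-l},\qquad \chi(k)=\chi(-l),
\]
the last because $l\equiv-k\pmod q$. As $j$ runs over $0,\ldots,m_{k}-1$, the integer $l$ runs over the residue class $-k\bmod q$ from $q-k$ up to $qm_{k}-k$.

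\textbf{Step 3.} Check that these ranges partition $\{l\in\mathbb{Z}:0\leqslant l\leqslant x\}$ as $k$ varies. This is the step I expect to need the most care. Two facts are needed. First, $q-k$ is the least non-negative element of the class $-k\bmod q$; this includes $k=q$, $l=0$. Second, $qm_{k}-k=q[(x+k)/q]-k$ is the largest element of that class not exceeding $x$, which is the division algorithm. It follows that
\[
\beta_{n,\chi}(x,\lambda)=\beta_{n,\chi}(\{x\}_{\chi},\lambda)+n\sum_{\substack{0\leqslant l\leqslant x\\ l\in\mathbb{Z}}}\chi(-l)\lambda^{-l}(x-l)^{n-1},
\]
which is \eqref{eq2.40}.

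\textbf{The case of negative $x$.} The argument above needs $m_{k}\geqslant0$ for all $k$, and this holds whenever $x>-1$. For $x\leqslant-1$ some $m_{k}$ are negative. Running \eqref{eq2.16} backwards gives
\[
\beta_{n}(y+m,\Lambda)=\Lambda^{-m}\beta_{n}(y,\Lambda)-n\sum_{j=m}^{-1}\Lambda^{-(m-j)}(y+j)^{n-1}.
\]
The same reindexing then produces $-n\sum_{x<l<0}\chi(-l)\lambda^{-l}(x-l)^{n-1}$, which is not an empty sum. So for $x\leqslant-1$ the identity should be read with this complementary sum, following the usual convention $\sum_{0\leqslant k\leqslant x}=-\sum_{x<k<0}$. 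I would state this explicitly, or else restrict \eqref{eq2.40} to $x>-1$.
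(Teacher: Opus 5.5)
Your argument is correct for $x\geqslant-1$, and it takes a genuinely different route from the paper's.

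\textbf{Comparison with the paper.} The paper sets $C_{n}(x)=\sum_{0\leqslant k\leqslant x}\chi(-k)\lambda^{-k}(x-k)^{n-1}$. It then uses \eqref{eq2.41} and the generalized difference equation \eqref{eq2.11} to show that $\lambda^{x}\bigl(\beta_{n,\chi}(x,\lambda)-nC_{n}(x)\bigr)$ has period $q$. Since $\lambda^{x}\beta_{n,\chi}(\{x\}_{\chi},\lambda)$ has the same period, this reduces the claim to $0\leqslant x<q$. There every $[(x+k)/q]$ is $0$ or $1$, so a single application of \eqref{eq2.16} finishes.

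You skip the periodicity reduction entirely. You iterate \eqref{eq2.16} $m_{k}$ times inside \eqref{eq2.5}, reindex by $l=q(m_{k}-j)-k$, and use the division algorithm to show that the residue-class ranges $[q-k,\,qm_{k}-k]$ tile $[0,x]\cap\mathbb{Z}$. The paper's version keeps the bookkeeping to one step of the difference equation, at the cost of an extra periodicity lemma. Yours is a one-pass computation that needs only \eqref{eq2.5} and the $q=1$ difference equation, and it makes the role of the sign of $m_{k}$ explicit.

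\textbf{The case $x<-1$.} That explicitness is what lets you see that \eqref{eq2.40}, read literally, is false for $x<-1$. For example, take $q=1$, $\lambda=1$, $n=1$. Then $\beta_{1,\chi}(\{x\}_{\chi},1)=\{x\}-\frac{1}{2}$ and $\beta_{1,\chi}(x,1)=x+\frac{1}{2}$. At $x=-\frac{3}{2}$ the left side is $0$, while the right side is $-1$ because the sum is empty.

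The paper's proof carries the same hidden restriction. In \eqref{eq2.41}, splitting $\{0\leqslant k\leqslant x+q\}$ into $\{0\leqslant k\leqslant q-1\}$ and $\{q\leqslant k\leqslant x+q\}$ requires $x+q\geqslant q-1$, i.e.\ $x\geqslant-1$. So the periodicity is only established, and only true, for $x\geqslant-1$. This does no harm later, since \eqref{eq3.14}--\eqref{eq3.15} are only integrated over $[0,q]$. Your complementary-sum version for $x<-1$, $-n\sum_{x<l<0}\chi(-l)\lambda^{-l}(x-l)^{n-1}$, is the correct replacement.

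\textbf{A boundary slip.} You say $m_{k}\geqslant0$ for all $k$ when $x>-1$ and that some $m_{k}$ are negative for $x\leqslant-1$. In fact $m_{k}=[(x+k)/q]\geqslant0$ for every $k$ as soon as $x\geqslant-1$, because $x+k\geqslant x+1\geqslant0$. At $x=-1$ the sum over $-1<l<0$ is empty, not nonempty as you state. So the literal identity holds on all of $[-1,\infty)$, and the restriction you propose should be $x\geqslant-1$ rather than $x>-1$.
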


\begin{proof}
Let
\begin{equation*}
C_{n}(x)=\underset{\substack{0\leqslant k\leqslant x\\ k\in\mathbb{Z}}}{\sum}\chi(-k)\lambda^{-k}(x-k)^{n-1}.
\end{equation*}
Then, we have
\begin{eqnarray}\label{eq2.41}
C_{n}(x+q)&=&\underset{\substack{0\leqslant k\leqslant q-1\\ k\in\mathbb{Z}}}{\sum}\chi(-k)\lambda^{-k}(x+q-k)^{n-1}\nonumber\\
&&+\underset{\substack{q\leqslant k\leqslant x+q\\ k\in\mathbb{Z}}}{\sum}\chi(-k)\lambda^{-k}(x+q-k)^{n-1}\nonumber\\
&=&\lambda^{-q}\underset{\substack{0\leqslant k\leqslant q-1\\ k\in\mathbb{Z}}}{\sum}\chi(q-k)\lambda^{q-k}(x+q-k)^{n-1}\nonumber\\
&&+\lambda^{-q}\underset{\substack{0\leqslant k\leqslant x\\ k\in\mathbb{Z}}}{\sum}\chi(-k)\lambda^{-k}(x-k)^{n-1}\nonumber\\
&=&\lambda^{-q}\underset{\substack{1\leqslant k\leqslant q\\ k\in\mathbb{Z}}}{\sum}\chi(k)\lambda^{k}(x+k)^{n-1}+\lambda^{-q}C_{n}(x).
\end{eqnarray}
Applying \eqref{eq2.11} to the right-hand side of \eqref{eq2.41}, we obtain
\begin{equation*}
C_{n}(x+q)-\lambda^{-q}C_{n}(x)=\frac{\beta_{n,\chi}(x+q,\lambda)-\lambda^{-q}\beta_{n,\chi}(x,\lambda)}{n},
\end{equation*}
which means
\begin{equation*}
\lambda^{x+q}\bigl(\beta_{n,\chi}(x+q,\lambda)-nC_{n}(x+q)\bigl)=\lambda^{x}\bigl(\beta_{n,\chi}(x,\lambda)-nC_{n}(x)\bigl).
\end{equation*}
Hence, $\lambda^{x}\bigl(\beta_{n,\chi}(x,\lambda)-nC_{n}(x)\bigl)$ is a periodic function on $\mathbb{R}$ of period $q$.
Since $\lambda^{x}\beta_{n,\chi}(\{x\}_{\chi},\lambda)$ is also such a function, it suffices to prove that \eqref{eq2.40} holds in the case when $0\leqslant x<q$. Obviously, in this case, we have
\begin{eqnarray*}
\beta_{n,\chi}(\{x\}_{\chi},\lambda)&=&q^{n-1}\underset{\substack{1\leqslant k< q-x\\ k\in\mathbb{Z}}}{\sum}\chi(k)\lambda^{k-q[\frac{x+k}{q}]}\beta_{n}\biggl(\biggl\{\frac{x+k}{q}\biggl\},\lambda^{q}\biggl)\nonumber\\
&&+q^{n-1}\underset{\substack{q-x\leqslant k\leqslant q\\ k\in\mathbb{Z}}}{\sum}\chi(k)\lambda^{k-q[\frac{x+k}{q}]}\beta_{n}\biggl(\biggl\{\frac{x+k}{q}\biggl\},\lambda^{q}\biggl)\nonumber\\
&=&q^{n-1}\underset{\substack{1\leqslant k< q-x\\ k\in\mathbb{Z}}}{\sum}\chi(k)\lambda^{k}\beta_{n}\biggl(\frac{x+k}{q},\lambda^{q}\biggl)\nonumber\\
&&+q^{n-1}\lambda^{-q}\underset{\substack{q-x\leqslant k\leqslant q\\ k\in\mathbb{Z}}}{\sum}\chi(k)\lambda^{k}\beta_{n}\biggl(\frac{x+k}{q}-1,\lambda^{q}\biggl).
\end{eqnarray*}
Thus, we know from \eqref{eq2.5} and \eqref{eq2.16} that the above identity can be rewritten as
\begin{eqnarray*}
\beta_{n,\chi}(\{x\}_{\chi},\lambda)&=&q^{n-1}\underset{\substack{1\leqslant k< q-x\\ k\in\mathbb{Z}}}{\sum}\chi(k)\lambda^{k}\beta_{n}\biggl(\frac{x+k}{q},\lambda^{q}\biggl)\nonumber\\
&&+q^{n-1}\underset{\substack{q-x\leqslant k\leqslant q\\ k\in\mathbb{Z}}}{\sum}\chi(k)\lambda^{k}\beta_{n}\biggl(\frac{x+k}{q},\lambda^{q}\biggl)\nonumber\\
&&-n\lambda^{-q}\underset{\substack{q-x\leqslant k\leqslant q\\ k\in\mathbb{Z}}}{\sum}\chi(k)\lambda^{k}(x+k-q)^{n-1}\nonumber\\
&=&\beta_{n,\chi}(x,\lambda)-n\lambda^{-q}\underset{\substack{q-x\leqslant k\leqslant q\\ k\in\mathbb{Z}}}{\sum}\chi(k)\lambda^{k}(x+k-q)^{n-1}\nonumber\\
&=&\beta_{n,\chi}(x,\lambda)-n\underset{\substack{0\leqslant k\leqslant x\\ k\in\mathbb{Z}}}{\sum}\chi(-k)\lambda^{-k}(x-k)^{n-1}.
\end{eqnarray*}
This completes the proof of Proposition \ref{pro2.8}.
\end{proof}

\section{Fourier series of generalized Apostol-Bernoulli functions}\label{sec3}

We now present the Fourier series of the generalized Apostol-Bernoulli functions, which is given as follows.

\begin{theorem}\label{thm3.1} Let $n,q\in\mathbb{N}$, $x\in\mathbb{R}$ and $\lambda\in\mathbb{C}\setminus\{0\}$. Let $\chi$ be a Dirichlet character modulo $q$. Then
\begin{equation}\label{eq3.1}
\overline{\beta}_{n,\chi}(x,\lambda)=-\frac{q^{n-1}n!}{\lambda^{x}(2\pi \mathrm{i})^{n}}\sideset{}{'}\sum_{k=-\infty}^{+\infty}\frac{G(k,\chi)e^{\frac{2\pi\mathrm{i}kx}{q}}}{(k-\frac{q\log\lambda}{2\pi\mathrm{i}})^{n}},
\end{equation}
where the dash denotes throughout that undefined terms are excluded from the
sum, and $G(k,\chi)$ is the Gauss sum defined as in \eqref{eq1.11}.
\end{theorem}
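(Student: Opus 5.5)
The plan is to reduce to the classical Fourier expansion of the Apostol--Bernoulli functions (the case $q=1$) and then assemble the character sum using \eqref{eq2.34}.

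\textbf{Step 1: the case $q=1$.} Write $\lambda=e^{2\pi\mathrm{i}\alpha}$ with $\alpha=\frac{\log\lambda}{2\pi\mathrm{i}}$, so $\Re\alpha\in(-\frac12,\frac12]$. I first want, for $n\in\mathbb{N}$ and $y\in\mathbb{R}$,
\begin{equation*}
\lambda^{y}\overline{\beta}_{n}(y,\lambda)=-\frac{n!}{(2\pi\mathrm{i})^{n}}\sideset{}{'}\sum_{m=-\infty}^{+\infty}\frac{e^{2\pi\mathrm{i}my}}{(m-\alpha)^{n}}.
\end{equation*}
By the remark after Definition \ref{def2.7}, $\lambda^{y}\overline{\beta}_{n}(y,\lambda)$ has period $1$. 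The route is to compute its Fourier coefficients on $[0,1)$. We have
\[
c_m=\int_0^1\lambda^{y}\beta_n(y,\lambda)e^{-2\pi\mathrm{i}my}\,dy .
\]
Integrate by parts using $\frac{d}{dy}\beta_n(y,\lambda)=n\beta_{n-1}(y,\lambda)$, which follows from \eqref{eq2.10}, together with the boundary relation $\lambda\beta_n(1,\lambda)-\beta_n(0,\lambda)=n\delta_{1,n}$, which is \eqref{eq2.16} at $x=0$. This gives a recursion $c_m^{(n)}=\frac{n}{2\pi\mathrm{i}(m-\alpha)}\,c_m^{(n-1)}$ plus a boundary term present only when $n=1$. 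For $n=1$, compute $c_m^{(1)}=-\frac{1}{2\pi\mathrm{i}(m-\alpha)}$ directly, using $\beta_0=0$ when $\lambda\neq1$ and $\beta_0=1$ when $\lambda=1$.

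If $\alpha\in\mathbb{Z}$, i.e.\ $\lambda=1$, the term $m=\alpha=0$ is undefined. The $\frac12\delta_{1,n}$ correction in \eqref{eq2.35} ensures the value at the jump is the average, so Dirichlet's theorem gives pointwise convergence for $n=1$; for $n\geqslant2$ the convergence is absolute.

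\textbf{Step 2: assembling.} Substitute Step 1 into \eqref{eq2.34} with $y=\frac{x+k}{q}$ and $\lambda^{q}$ in place of $\lambda$; then $\frac{\log\lambda^q}{2\pi\mathrm{i}}\equiv q\alpha \pmod 1$. Take care with the branch: the shift between $q\alpha$ and the principal value is an integer, which amounts to reindexing $m$. The prefactor $\lambda^{k-q[\frac{x+k}{q}]}$ combined with $\lambda^{q\{\frac{x+k}{q}\}}$ gives $\lambda^{x+k}\lambda^{-x}$. In the combined sum, $\lambda^{q\cdot(\cdot)}$ combines with $e^{2\pi\mathrm{i}m(x+k)/q}$ to give $e^{2\pi\mathrm{i}(m+q\alpha)(x+k)/q}$ over $(m+q\alpha-q\alpha')^{n}$. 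After the reindexing, the exponentials become $e^{2\pi\mathrm{i}mk/q}$, and summing over $k$ with $\chi(k)$ produces $G(m,\chi)$. We are left with
\[
\frac{q^{n-1}n!}{\lambda^{x}(2\pi\mathrm{i})^n}\sum_m \frac{G(m,\chi)e^{2\pi\mathrm{i}mx/q}}{(m-q\alpha)^{n}}
\]
(up to the sign), because $(m-q\alpha)^n/q^n$ carries the $q^n$ factor. This gives \eqref{eq3.1} up to matching the $\delta$-terms.

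\textbf{Main obstacle: the case $n=1$.} Two issues arise here. First, the sums are only conditionally convergent, so the exchange of the finite $k$-sum with the symmetric limit $\sum_{|m|\leqslant M}$ must be done with symmetric partial sums throughout. Second, the correction terms must match. At $x\in\mathbb{Z}$, exactly one $k$ (namely $k\equiv -x \pmod q$) has $\frac{x+k}{q}\in\mathbb{Z}$. That $k$ contributes $\chi(-x)\lambda^{k}\cdot\frac12\lambda^{-q(x+k)/q}$, which equals $\frac12\lambda^{-x}\chi(-x)$, exactly the term in \eqref{eq2.33}. I would check this carefully, together with the branch-shift reindexing (when $q\alpha$ is not the principal logarithm of $\lambda^q$).
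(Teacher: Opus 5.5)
Your proposal is correct. Step 1 is essentially the paper's $q=1$ argument: Fourier coefficients on $[0,1)$, integration by parts using \eqref{eq2.16} and $\partial_x\beta_n=n\beta_{n-1}$. For $q\geqslant2$, however, you take a genuinely different route.

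The paper does not substitute the $q=1$ series into \eqref{eq2.34}. It computes the Fourier coefficients of the period-$q$ function $\lambda^{x}\beta_{n,\chi}(\{x\}_{\chi},\lambda)$ directly, in four steps:
\begin{itemize}
\item it uses the $q=1$ series only to obtain the derivative rule \eqref{eq3.12}, which gives the recursion \eqref{eq3.13};
\item it gets $c_{1,k}$ from the explicit step-function form \eqref{eq3.14}--\eqref{eq3.15} of $\beta_{1,\chi}(\{t\}_{\chi},\lambda)$, which comes from Proposition \ref{pro2.8};
\item the Gauss sum appears when the step function is integrated in \eqref{eq3.17};
\item the resonant coefficient $2\pi\mathrm{i}k=q\log\lambda$ is handled separately in \eqref{eq3.20}.
\end{itemize}

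Your route amounts to proving the function analogue
\[
\overline{\beta}_{n,\chi}(x,\lambda)=q^{n-1}\sum_{k=1}^{q}\chi(k)\lambda^{k}\overline{\beta}_{n}\Bigl(\frac{x+k}{q},\lambda^{q}\Bigr)
\]
of \eqref{eq2.5}, and then summing the $q=1$ series against $\chi(k)$. Your matching of the $\tfrac12\delta$-terms at $x\in\mathbb{Z}$ is exactly what makes this identity hold for $n=1$. What your approach buys:
\begin{itemize}
\item it bypasses Proposition \ref{pro2.8} and the integral \eqref{eq3.17};
\item the resonant case needs no separate treatment, since the excluded index simply moves to $m=q\alpha$;
\item you need not re-verify, via the jump analysis and Dirichlet--Jordan, that the period-$q$ series converges to the midpoint at integers, because this is inherited from $q=1$.
\end{itemize}
The price is branch bookkeeping between $\log(\lambda^{q})$ and $q\log\lambda$. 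The paper never faces this, because its denominators $2\pi\mathrm{i}k-q\log\lambda$ arise directly.

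A few points to tidy:
\begin{itemize}
\item In Step 1 with $\lambda=1$, noting that the $m=0$ term is undefined is not enough. You must show that the coefficient $c_{0}=\int_{0}^{1}B_{n}(y)\,\mathrm{d}y$ vanishes; this is \eqref{eq3.10} in the paper.
\item Your exponent bookkeeping in Step 2 is garbled. The clean statement: set $j=q\alpha-\frac{\log(\lambda^{q})}{2\pi\mathrm{i}}\in\mathbb{Z}$. Then $\lambda^{k}(\lambda^{q})^{-(x+k)/q}=\lambda^{-x}e^{2\pi\mathrm{i}j(x+k)/q}$, and the factor $e^{2\pi\mathrm{i}j(x+k)/q}$ is absorbed by the shift $m\mapsto m+j$. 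This turns $\bigl(m-\frac{\log(\lambda^{q})}{2\pi\mathrm{i}}\bigr)^{n}$ into $(m-q\alpha)^{n}$.
\item No extra power of $q$ enters: the $q^{n-1}$ comes straight from \eqref{eq2.34}, and the sign is just the minus from the $q=1$ case.
\item For $n=1$, the shift turns $\sum_{|m|\leqslant M}$ into $\sum_{-M+j\leqslant m\leqslant M+j}$. These differ by $2|j|$ terms of size $O(1/M)$, so symmetric convergence is preserved and interchanging with the finite $k$-sum is legitimate.
\end{itemize}
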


\begin{proof}
Since $\lambda^{x}\beta_{n,\chi}(\{x\}_{\chi},\lambda)$ is of bounded variation on every finite interval, by Dirichlet-Jordan test we have
\begin{equation}\label{eq3.2}
\lambda^{x}\overline{\beta}_{n,\chi}(x,\lambda)=\sum_{k=-\infty}^{+\infty}c_{n,k}(q)e^{\frac{2\pi\mathrm{i}kx}{q}},
\end{equation}
where the Fourier coefficients $c_{n,k}(q)$ are determined by
\begin{eqnarray}\label{eq3.3}
c_{n,k}(q)&=&\frac{1}{q}\int_{0}^{q}\lambda^{t}\beta_{n,\chi}(\{t\}_{\chi},\lambda)e^{-\frac{2\pi\mathrm{i}kt}{q}}\text{d}t\nonumber\\
&=&\frac{1}{q}\int_{0}^{q}\beta_{n,\chi}(\{t\}_{\chi},\lambda)e^{(\log\lambda-\frac{2\pi\mathrm{i}k}{q})t}\text{d}t.
\end{eqnarray}
Trivially, for $q=1$, we have
\begin{eqnarray}\label{eq3.4}
c_{n,k}(1)&=&\int_{0}^{1}\beta_{n}(t,\lambda)e^{(\log\lambda-2\pi\mathrm{i}k)t}\text{d}t.
\end{eqnarray}
Note that by taking the derivative with respect to $x$ on both sides of \eqref{eq1.15}, we have
\begin{equation}\label{eq3.5}
\frac{\partial}{\partial x}\beta_{n}(x,\lambda)=n\beta_{n-1}(x,\lambda)\quad(n\in\mathbb{N}).
\end{equation}
Hence, using integration by parts on \eqref{eq3.4} and in light of \eqref{eq2.16} and \eqref{eq3.5}, we know that if $\lambda\neq1$, or if $\lambda=1$ and $k\neq0$, then
\begin{eqnarray}\label{eq3.6}
c_{n,k}(1)&=&-\frac{1}{2\pi\mathrm{i}k-\log\lambda}\bigl(\lambda\beta_{n}(1,\lambda)-\beta_{n}(0,\lambda)-nc_{n-1,k}(1)\bigl)\nonumber\\
&=&-\frac{1}{2\pi\mathrm{i}k-\log\lambda}\bigl(\delta_{1,n}-nc_{n-1,k}(1)\bigl).
\end{eqnarray}
Also, from \eqref{eq1.15}, we have
\begin{equation*}
(\lambda e^{t}-1)\sum_{n=0}^{\infty}\beta_{n}(x,\lambda)\frac{t^{n}}{n!}=te^{xt}.
\end{equation*}
Applying the Taylor series of $e^{t}$ to both sides of the above identity, we find that
\begin{equation}\label{eq3.7}
\beta_{0}(x,\lambda)=\begin{cases}
1,  &\lambda=1,\\
0,  &\lambda\not=1,
\end{cases}
\end{equation}
and
\begin{equation}\label{eq3.8}
\beta_{1}(x,\lambda)=\begin{cases}
x-\frac{1}{2},  &\lambda=1,\\
\frac{1}{\lambda-1},  &\lambda\not=1.
\end{cases}
\end{equation}
It follows from \eqref{eq3.7} that if $\lambda\neq1$, or if $\lambda=1$ and $k\neq0$, then
\begin{equation*}
c_{0,k}(1)=0,
\end{equation*}
which together with \eqref{eq3.6} yields, for $\lambda\neq1$ or ($\lambda=1$ and $k\neq0$),
\begin{equation}\label{eq3.9}
c_{n,k}(1)=-\frac{n!}{(2\pi\mathrm{i}k-\log\lambda)^{n}}.
\end{equation}
Because for $\lambda=1$, by \eqref{eq2.16} and \eqref{eq3.5} we have
\begin{equation}\label{eq3.10}
c_{n,0}(1)=\int_{0}^{1}\beta_{n}(t,1)\text{d}t=0\quad(n\in\mathbb{N}),
\end{equation}
we thus see, by inserting \eqref{eq3.9} and \eqref{eq3.10} into \eqref{eq3.2}, that
\begin{equation}\label{eq3.11}
\overline{\beta}_{n}(x,\lambda)=-\frac{n!}{\lambda^{x}(2\pi \mathrm{i})^{n}}\sideset{}{'}\sum_{k=-\infty}^{+\infty}\frac{e^{2\pi\mathrm{i}kx}}{(k-\frac{\log\lambda}{2\pi\mathrm{i}})^{n}},
\end{equation}
which means Theorem \ref{thm3.1} holds for $q=1$.

We next consider the case $q\geqslant2$. Taking the derivative with respect to $x$ on both sides of \eqref{eq3.11}, we obtain that for $n\geqslant2$ and $x\in\mathbb{R}$,
\begin{equation*}
\frac{\partial}{\partial x}\overline{\beta}_{n}(x,\lambda)=n\overline{\beta}_{n-1}(x,\lambda).
\end{equation*}
From this, \eqref{eq2.34} and \eqref{eq2.35}, we see that
\begin{equation}\label{eq3.12}
\frac{\partial}{\partial x}\beta_{n,\chi}(\{x\}_{\chi},\lambda)=n\beta_{n-1,\chi}(\{x\}_{\chi},\lambda),
\end{equation}
for $n\geqslant2$ and $x\in\mathbb{R}$, except when $n=2$ and $x$ belongs to a certain set of measure zero (namely, the set of $x$ for which there exists $k\in\{1,2,\ldots,q\}$ such that $q\mid(x+k)$). Using integration by parts in \eqref{eq3.3} and taking \eqref{eq3.12} into account, we know that if $2\pi\mathrm{i}k-q\log\lambda\neq0$, then
\begin{eqnarray}\label{eq3.13}
c_{n,k}(q)=\frac{qn}{2\pi\mathrm{i}k-q\log\lambda}c_{n-1,k}(q)\quad(n\geqslant2).
\end{eqnarray}
Clearly, from \eqref{eq2.5}, \eqref{eq2.40} and \eqref{eq3.8}, it follows that if $\lambda^{q}=1$, then for $t\in\mathbb{R}$,
\begin{equation}\label{eq3.14}
\beta_{1,\chi}(\{t\}_{\chi},\lambda)=\frac{t}{q}\sum_{r=1}^{q}\chi(r)\lambda^{r}+\sum_{r=1}^{q}\chi(r)\lambda^{r}B_{1}\biggl(\frac{r}{q}\biggl)-\underset{\substack{0\leqslant r\leqslant t\\ r\in\mathbb{Z}}}{\sum}\chi(-r)\lambda^{-r},
\end{equation}
and if $\lambda^{q}\neq1$, then for $t\in\mathbb{R}$,
\begin{equation}\label{eq3.15}
\beta_{1,\chi}(\{t\}_{\chi},\lambda)=\frac{1}{\lambda^{q}-1}\sum_{r=1}^{q}\chi(r)\lambda^{r}-\underset{\substack{0\leqslant r\leqslant t\\ r\in\mathbb{Z}}}{\sum}\chi(-r)\lambda^{-r}.
\end{equation}
Inserting \eqref{eq3.14} and \eqref{eq3.15} into \eqref{eq3.3}, respectively, and then using integration by parts, we find that for $2\pi\mathrm{i}k-q\log\lambda\neq0$,
\begin{eqnarray}\label{eq3.16}
c_{1,k}&=&-\frac{1}{2\pi\mathrm{i}k-q\log\lambda}\sum_{r=1}^{q}\chi(r)\lambda^{r}\nonumber\\
&&-\frac{1}{q}\int_{0}^{q}\underset{\substack{0\leqslant r\leqslant t\\ r\in\mathbb{Z}}}{\sum}\chi(-r)\lambda^{-r}e^{(\log\lambda-\frac{2\pi\mathrm{i}k}{q})t}\text{d}t.
\end{eqnarray}
Observe that for $2\pi\mathrm{i}k-q\log\lambda\neq0$,
\begin{eqnarray}\label{eq3.17}
&&\int_{0}^{q}\underset{\substack{0\leqslant r\leqslant t\\ r\in\mathbb{Z}}}{\sum}\chi(-r)\lambda^{-r}e^{(\log\lambda-\frac{2\pi\mathrm{i}k}{q})t}\text{d}t\nonumber\\
&&\qquad=\chi(0)\lambda^{0}\int_{0}^{1}e^{(\log\lambda-\frac{2\pi\mathrm{i}k}{q})t}\text{d}t\nonumber\\
&&\qquad\quad+\bigl(\chi(0)\lambda^{0}+\chi(-1)\lambda^{-1}\bigl)\int_{1}^{2}e^{(\log\lambda-\frac{2\pi\mathrm{i}k}{q})t}\text{d}t\nonumber\\
&&\qquad\quad+\bigl(\chi(0)\lambda^{0}+\chi(-1)\lambda^{-1}+\chi(-2)\lambda^{-2}\bigl)\int_{2}^{3}e^{(\log\lambda-\frac{2\pi\mathrm{i}k}{q})t}\text{d}t\nonumber\\
&&\qquad\quad+\cdots\nonumber\\
&&\qquad\quad+\bigl(\chi(0)\lambda^{0}+\chi(-1)\lambda^{-1}+\cdots+\chi\bigl(-(q-1)\bigl)\lambda^{-(q-1)}\bigl)\int_{q-1}^{q}e^{(\log\lambda-\frac{2\pi\mathrm{i}k}{q})t}\text{d}t\nonumber\\
&&\qquad=\sum_{r=0}^{q-1}\chi(-r)\lambda^{-r}\int_{r}^{q}e^{(\log\lambda-\frac{2\pi\mathrm{i}k}{q})t}\text{d}t\nonumber\\
&&\qquad=-\frac{q}{2\pi\mathrm{i}k-q\log\lambda}\sum_{r=0}^{q-1}\chi(-r)\lambda^{-r}\bigl(\lambda^{q}-\lambda^{r}e^{-\frac{2\pi\mathrm{i}kr}{q}}\bigl)\nonumber\\
&&\qquad=-\frac{q}{2\pi\mathrm{i}k-q\log\lambda}\biggl(\sum_{r=1}^{q}\chi(r)\lambda^{r}-\sum_{r=1}^{q}\chi(r)e^{\frac{2\pi\mathrm{i}kr}{q}}\biggl).
\end{eqnarray}
So from \eqref{eq3.16} and \eqref{eq3.17}, we obtain that for $2\pi\mathrm{i}k-q\log\lambda\neq0$,
\begin{equation}\label{eq3.18}
c_{1,k}=-\frac{G(k,\chi)}{2\pi\mathrm{i}k-q\log\lambda}.
\end{equation}
It follows from \eqref{eq3.13} and \eqref{eq3.18} that if $2\pi\mathrm{i}k-q\log\lambda\neq0$, then for $n\in\mathbb{N}$,
\begin{eqnarray}\label{eq3.19}
c_{n,k}(q)=-\frac{q^{n-1}n!G(k,\chi)}{(2\pi\mathrm{i}k-q\log\lambda)^{n}}.
\end{eqnarray}
On the other hand, if $2\pi\mathrm{i}k-q\log\lambda=0$, then by using integration by parts, we know from \eqref{eq2.34}, \eqref{eq3.3} and \eqref{eq3.12} that for $n\in\mathbb{N}$,
\begin{eqnarray}\label{eq3.20}
c_{n,k}(q)&=&\frac{1}{q}\int_{0}^{q}\beta_{n,\chi}(\{t\}_{\chi},\lambda)\text{d}t\nonumber\\
&=&\frac{\beta_{n+1,\chi}(\{q\}_{\chi},\lambda)-\beta_{n+1,\chi}(\{0\}_{\chi},\lambda)}{q(n+1)}\nonumber\\
&=&0.
\end{eqnarray}
Therefore, inserting \eqref{eq3.19} and \eqref{eq3.20} into \eqref{eq3.2} yields \eqref{eq3.1}, which finishes the proof of Theorem \ref{thm3.1}.
\end{proof}

It follows that some special cases of Theorem \ref{thm3.1} can be obtained, namely the following Fourier series of the Apostol-Bernoulli functions and of the Apostol-Euler functions.

\begin{corollary}\label{cor3.2} Let $x\in\mathbb{R}$ and $\lambda\in\mathbb{C}\setminus\{0\}$. Then, for $n\in\mathbb{N}$,
\begin{equation}\label{eq3.21}
\overline{\beta}_{n}(x,\lambda)=-\frac{n!}{\lambda^{x}(2\pi \mathrm{i})^{n}}\sum_{k\in\mathbb{Z}}^{*}\frac{e^{2\pi\mathrm{i}kx}}{(k-\frac{\log\lambda}{2\pi\mathrm{i}})^{n}},
\end{equation}
and for $n\in\mathbb{N}_{0}$,
\begin{equation}\label{eq3.22}
\overline{\varepsilon}_{n}(x,\lambda)=\frac{2n!}{\lambda^{x}(2\pi \mathrm{i})^{n+1}}\sum_{k\in\mathbb{Z}}^{**}\frac{e^{2\pi\mathrm{i}(k-\frac{1}{2})x}}{(k-\frac{1}{2}-\frac{\log\lambda}{2\pi\mathrm{i}})^{n+1}},
\end{equation}
where the notation $\sum_{k\in\mathbb{Z}}^{*}$ means summation over $k\in\mathbb{Z}\setminus\{0\}$ if $\lambda=1$, and over $k\in\mathbb{Z}$ if $\lambda\neq1$; similarly, $\sum_{k\in\mathbb{Z}}^{**}$ means summation over $k\in\mathbb{Z}\setminus\{1\}$ if $\lambda=-1$, and over $k\in\mathbb{Z}$ if $\lambda\neq-1$.
\end{corollary}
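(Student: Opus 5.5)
The plan is to obtain both identities by specializing Theorem \ref{thm3.1} to $q=1$. For \eqref{eq3.21} this is immediate. When $q=1$ the only character is the trivial one, so $G(k,\chi)=\chi(1)e^{2\pi\mathrm{i}k}=1$ for every $k$. By \eqref{eq2.35} together with \eqref{eq2.34} and \eqref{eq2.33}, $\overline{\beta}_{n,\chi}(x,\lambda)=\overline{\beta}_{n}(x,\lambda)$. The excluded terms of the dashed sum in \eqref{eq3.1} are those with $k=\frac{\log\lambda}{2\pi\mathrm{i}}$. Since $-\pi<\arg\lambda\leqslant\pi$, such an integer $k$ exists only when $\lambda=1$, and then it is $k=0$. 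This matches exactly the starred sum, so \eqref{eq3.21} is just \eqref{eq3.11}.

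For \eqref{eq3.22}, I would first relate the Apostol--Euler functions to the Apostol--Bernoulli functions of index $n+1$ with parameter $-\lambda$. Comparing \eqref{eq1.15} and \eqref{eq2.4}, we have $\frac{2e^{xt}}{\lambda e^{t}+1}=-\frac{2}{t}\frac{te^{xt}}{(-\lambda)e^{t}-1}$, so $\varepsilon_{n}(x,\lambda)=-\frac{2}{n+1}\beta_{n+1}(x,-\lambda)$. Inserting this into \eqref{eq2.36} and \eqref{eq2.35} should give
\begin{equation*}
\overline{\varepsilon}_{n}(x,\lambda)=-\frac{2}{n+1}\overline{\beta}_{n+1}(x,-\lambda).
\end{equation*}
The delta correction terms must match here. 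The term $-\delta_{0,n}\delta_{\mathbb{Z}}(x)(-\lambda)^{-x}$ should equal $-\frac{2}{n+1}\cdot\frac12\delta_{1,n+1}\delta_{\mathbb{Z}}(x)(-\lambda)^{-x}$, and it does. I also need $(-\lambda)^{-[x]}=(-1)^{[x]}\lambda^{-[x]}$, which holds because $[x]$ is an integer.

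I would then apply \eqref{eq3.21} with $n+1$ and $-\lambda$. This requires tracking the principal logarithm. If $\arg\lambda\in(0,\pi]$, then $\log(-\lambda)=\log\lambda-\pi\mathrm{i}$; if $\arg\lambda\in(-\pi,0]$, then $\log(-\lambda)=\log\lambda+\pi\mathrm{i}$. In the first case $\frac{\log(-\lambda)}{2\pi\mathrm{i}}=\frac{\log\lambda}{2\pi\mathrm{i}}-\frac12$. We also get $(-\lambda)^{x}=\lambda^{x}e^{-\pi\mathrm{i}x}$, so
\begin{equation*}
\frac{e^{2\pi\mathrm{i}kx}}{(-\lambda)^{x}}=\frac{e^{2\pi\mathrm{i}(k+\frac12)x}}{\lambda^{x}}.
\end{equation*}
Reindexing $k\to k-1$ then produces $k-\frac12$ throughout. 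The constant becomes $-\frac{2}{n+1}\cdot\bigl(-\frac{(n+1)!}{(2\pi\mathrm{i})^{n+1}}\bigr)=\frac{2n!}{(2\pi\mathrm{i})^{n+1}}$. The excluded term $k=0$ for $-\lambda=1$, i.e.\ $\lambda=-1$, becomes $k=1$ after the shift, matching $\sum^{**}$.

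In the second case the shift is $+\frac12$ and the phase is $e^{\pi\mathrm{i}x}$, giving $k+\frac12$ directly. Replacing $k$ by $-k$ would not fix this, but reindexing $k\to k-1$ gives $k-\frac12$ again. The excluded term cannot arise here, since $\lambda=-1$ has argument $\pi$.

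I expect the main obstacle to be this branch bookkeeping for $\log(-\lambda)$ and $(-\lambda)^{x}$, including the edge case $\lambda=-1$. I would handle it by checking that $(-\lambda)^{x}=e^{x\log(-\lambda)}$ combines consistently with the shift in each half-plane, rather than using $(-1)^{x}$ naively.
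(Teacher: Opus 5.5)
Your proposal is correct and is essentially the paper's own argument: set $q=1$ in Theorem \ref{thm3.1} for \eqref{eq3.21}, then replace $\lambda$ by $-\lambda$ and $n$ by $n+1$ and multiply by $-2/(n+1)$ for \eqref{eq3.22}, and you make the branch bookkeeping for $\log(-\lambda)$ explicit where the paper leaves it implicit. Two minor points: (i) when $\arg\lambda\in(-\pi,0]$ the substitution already gives $k-\frac12$, not $k+\frac12$, so no shift is needed there (a shift over all of $\mathbb{Z}$ is harmless, so your conclusion stands); (ii) at $\lambda=-1$ your generating-function derivation of $\varepsilon_{n}(x,\lambda)=-\frac{2}{n+1}\beta_{n+1}(x,-\lambda)$ fails, because $\beta_{0}(x,1)=1$ makes \eqref{eq2.4} singular at $t=0$, so there the relation has to serve as the definition of $\varepsilon_{n}(x,-1)$, which is also how the paper implicitly uses \eqref{eq2.3}.
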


\begin{proof}
Setting $q=1$ in \eqref{eq3.1} gives \eqref{eq3.21}. Similarly, setting $q=1$ in \eqref{eq3.1}, replacing $\lambda$ by $-\lambda$ and $n$ by $n+1$, and then multiplying by
$-2/(n+1)$ yields \eqref{eq3.22}.
\end{proof}

We remark that, for \eqref{eq3.21}, the cases $0<x<1$, $n=1$ and $0\leqslant x\leqslant 1$, $n\geqslant2$; and for \eqref{eq3.22}, the cases $0<x<1$, $n=0$ and $0\leqslant x\leqslant 1$, $n\geqslant1$, are in agreement with the results of Luo \cite{luo1}, who, in 2009, used the Lipschitz summation formula \cite{lipschitz} to prove them. Later, in 2011, Bayad \cite{bayad2} used contour integration to provide a new proof of Luo's results. It should be noted that Raabe \cite{raabe} in 1851 obtained the Fourier series of the Bernoulli polynomials. For the character analogue of the Lipschitz summation formula, see the 1975 work of Berndt \cite{berndt3}.

We also obtain the Fourier series of the generalized Bernoulli functions and of the generalized Euler functions of the first and second kinds as follows.

\begin{corollary}\label{cor3.3} Let $q\in\mathbb{N}$ and $x\in\mathbb{R}$. Let $\chi$ be a Dirichlet character modulo $q$. Then, for $n\in\mathbb{N}$,
\begin{equation}\label{eq3.23}
\overline{B}_{n}(x,\chi)=-\frac{q^{n-1}n!}{(2\pi \mathrm{i})^{n}}\sum_{k\in\mathbb{Z}\setminus\{0\}}\frac{G(k,\chi)e^{\frac{2\pi\mathrm{i}kx}{q}}}{k^{n}},
\end{equation}
for $n\in\mathbb{N}_{0}$,
\begin{equation}\label{eq3.24}
\overline{e}_{n}(x,\chi)=\frac{2q^{n}n!}{(2\pi \mathrm{i})^{n+1}}\sum_{k\in\mathbb{Z}}\frac{G(k,\chi)e^{\frac{2\pi\mathrm{i}(k-\frac{1}{2})x}{q}}}{(k-\frac{1}{2})^{n+1}},
\end{equation}
and for $n\in\mathbb{N}_{0}$ and $2\nmid q$,
\begin{equation}\label{eq3.25}
\overline{E}_{n}(x,\chi)=\frac{2q^{n}n!}{(2\pi \mathrm{i})^{n+1}}\sum_{k\in\mathbb{Z}}\frac{G(k,\chi)e^{\frac{2\pi\mathrm{i}(k-\frac{q}{2})x}{q}}}{(k-\frac{q}{2})^{n+1}}.
\end{equation}
\end{corollary}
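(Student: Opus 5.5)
The plan is to specialize Theorem \ref{thm3.1} to the three values $\lambda=1$, $\lambda=e^{\frac{\pi\mathrm{i}}{q}}$ and $\lambda=-1$. In each case we read off the corresponding generalized function from Definition \ref{def2.7} through \eqref{eq2.37}--\eqref{eq2.39}. For the Euler cases we also use the relations \eqref{eq2.6} and \eqref{eq2.8}.

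For \eqref{eq3.23}, take $\lambda=1$, so $\log\lambda=0$ and $\lambda^{x}=1$. Then $\overline{\beta}_{n,\chi}(x,1)=\overline{B}_{n}(x,\chi)$ by \eqref{eq2.37}. In \eqref{eq3.1} the only undefined term is $k=0$, and the result is exactly \eqref{eq3.23}.

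For \eqref{eq3.24}, take $\lambda=e^{\frac{\pi\mathrm{i}}{q}}$, so $\frac{q\log\lambda}{2\pi\mathrm{i}}=\frac12$. No term of the sum is then undefined, and $\lambda^{-x}=e^{-\frac{\pi\mathrm{i}x}{q}}$. Apply \eqref{eq3.1} with $n+1$ in place of $n$ and multiply by $-2/(n+1)$. This gives
\[
-\frac{2}{n+1}\overline{\beta}_{n+1,\chi}\bigl(x,e^{\frac{\pi\mathrm{i}}{q}}\bigr)=\frac{2q^{n}n!}{(2\pi\mathrm{i})^{n+1}}\sum_{k\in\mathbb{Z}}\frac{G(k,\chi)e^{\frac{2\pi\mathrm{i}(k-\frac12)x}{q}}}{(k-\frac12)^{n+1}}.
\]
It then remains to check that the left side equals $\overline{e}_{n}(x,\chi)$.

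\emph{This identification is the main point of care.} We apply \eqref{eq2.6} termwise to the representation \eqref{eq2.34}; the $q=1$ relation \eqref{eq2.3} converts each $\beta_{n+1}$ into $\varepsilon_{n}$. The correction term $\frac12\delta_{1,n+1}\delta_{\mathbb{Z}}(x)\lambda^{-x}\chi(-x)$ is nonzero only when $n=0$. After multiplication by $-2/(n+1)$ it becomes $-\delta_{0,n}\delta_{\mathbb{Z}}(x)e^{-\frac{\pi\mathrm{i}x}{q}}\chi(-x)$, which matches \eqref{eq2.38}.

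We also need the periodic part to match. From \eqref{eq2.34} with $\lambda^{q}=-1$, the factor $\lambda^{k-q[\frac{x+k}{q}]}$ equals $e^{\frac{\pi\mathrm{i}k}{q}}(-1)^{[\frac{x+k}{q}]}$. Using $-2\beta_{n+1}(y,-1)/(n+1)=E_{n}(y)$, which is the case $\lambda=1$ of \eqref{eq2.3} shifted by \eqref{eq2.16}, one should obtain the stated expression for $e_{n}(\{x\}_{\chi},\chi)$. The expected difficulty is bookkeeping of signs and of the factor $q^{n}$ (from $q^{(n+1)-1}$), not anything conceptual.

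For \eqref{eq3.25}, take $\lambda=-1$ with $2\nmid q$. Then $q\log\lambda/(2\pi\mathrm{i})=q/2$, which is not an integer, so again no term is excluded, and $\lambda^{-x}=e^{-\pi\mathrm{i}x}$. Combining this with $e^{\frac{2\pi\mathrm{i}kx}{q}}$ gives $e^{\frac{2\pi\mathrm{i}(k-\frac q2)x}{q}}$. The same manipulation with $n+1$ and the factor $-2/(n+1)$, now using \eqref{eq2.8}, yields \eqref{eq3.25}. The $\delta$-term becomes $-\delta_{0,n}\delta_{\mathbb{Z}}(x)(-1)^{x}\chi(-x)$, in agreement with \eqref{eq2.39}.
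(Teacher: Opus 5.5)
Your proposal is correct and matches the paper's proof: specialize \eqref{eq3.1} at $\lambda=1$, $\lambda=e^{\pi\mathrm{i}/q}$ and $\lambda=-1$, and in the two Euler cases replace $n$ by $n+1$ and multiply by $-2/(n+1)$; your explicit check that this reproduces \eqref{eq2.38}--\eqref{eq2.39}, including the $\delta$-correction terms, spells out what the paper leaves implicit in Definition \ref{def2.7}. One small citation slip: the identity $-2\beta_{n+1}(y,-1)/(n+1)=E_{n}(y)$ follows from \eqref{eq2.3} at $\lambda=1$ combined with \eqref{eq2.2}, or directly by comparing \eqref{eq1.15} with \eqref{eq2.4}, rather than from a shift via \eqref{eq2.16}.
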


\begin{proof}
Taking $\lambda=1$ in \eqref{eq3.1}, we get \eqref{eq3.23}. Taking $\lambda=e^{\frac{\pi\mathrm{i}}{q}}$ and $\lambda=-1$ in \eqref{eq3.1}, replacing $n$ by $n+1$, and then multiplying by
$-2/(n+1)$, we obtain \eqref{eq3.24} and \eqref{eq3.25}, respectively.
\end{proof}

As a result of Corollary \ref{cor3.3}, we have

\begin{corollary}\label{cor3.4} Let $q\in\mathbb{N}$ and $x\in\mathbb{R}$. Let $\chi$ be a primitive character modulo $q$. Then, for $n\in\mathbb{N}$,
\begin{equation}\label{eq3.26}
\overline{B}_{n}(x,\overline{\chi})=-\frac{q^{n-1}n!G(1,\overline{\chi})}{(2\pi \mathrm{i})^{n}}\sum_{k\in\mathbb{Z}\setminus\{0\}}\frac{\chi(k)e^{\frac{2\pi\mathrm{i}kx}{q}}}{k^{n}},
\end{equation}
for $n\in\mathbb{N}_{0}$,
\begin{equation}\label{eq3.27}
\overline{e}_{n}(x,\overline{\chi})=\frac{2q^{n}n!G(1,\overline{\chi})}{(2\pi \mathrm{i})^{n+1}}\sum_{k\in\mathbb{Z}}\frac{\chi(k)e^{\frac{2\pi\mathrm{i}(k-\frac{1}{2})x}{q}}}{(k-\frac{1}{2})^{n+1}},
\end{equation}
and for $n\in\mathbb{N}_{0}$ and $2\nmid q$,
\begin{equation}\label{eq3.28}
\overline{E}_{n}(x,\overline{\chi})=\frac{2q^{n}n!G(1,\overline{\chi})}{(2\pi \mathrm{i})^{n+1}}\sum_{k\in\mathbb{Z}}\frac{\chi(k)e^{\frac{2\pi\mathrm{i}(k-\frac{q}{2})x}{q}}}{(k-\frac{q}{2})^{n+1}}.
\end{equation}
\end{corollary}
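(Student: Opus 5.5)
This follows directly from Corollary \ref{cor3.3} once the Gauss sums of a primitive character are rewritten. The input is the classical separability property. If $\chi$ is primitive modulo $q$, then for every $k\in\mathbb{Z}$,
\begin{equation*}
G(k,\overline{\chi})=\chi(k)G(1,\overline{\chi}).
\end{equation*}
Since $\overline{\chi}$ is primitive whenever $\chi$ is, this is the same statement as $G(k,\chi)=\overline{\chi}(k)G(1,\chi)$ applied to $\overline{\chi}$.

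I would first record this identity, with proof, in two cases.

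\emph{Case $\gcd(k,q)=1$.} Substitute $r\mapsto rk^{-1}$ modulo $q$ in $\sum_{r=1}^{q}\overline{\chi}(r)e^{2\pi\mathrm{i}rk/q}$. This gives $\overline{\chi}(k^{-1})G(1,\overline{\chi})=\chi(k)G(1,\overline{\chi})$.

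\emph{Case $\gcd(k,q)=d>1$.} Here $\chi(k)=0$, so I must show $G(k,\overline{\chi})=0$.
\begin{itemize}
\item Write $k=dk'$ and $q=dq_1$, so that $e^{2\pi\mathrm{i}rk/q}$ depends only on $r$ modulo $q_1<q$.
\item Primitivity gives some $c\equiv1\pmod{q_1}$ with $\gcd(c,q)=1$ and $\overline{\chi}(c)\neq1$.
\item The substitution $r\mapsto cr$ then shows $G(k,\overline{\chi})=\overline{\chi}(c)G(k,\overline{\chi})$.
\end{itemize}
The case $k=0$ falls under the second case: for $q\ge2$ we have $G(0,\overline{\chi})=\sum_r\overline{\chi}(r)=0=\chi(0)$. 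For $q=1$ the identity is trivial.

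Next I would apply Corollary \ref{cor3.3} with $\overline{\chi}$ in place of $\chi$.
\begin{itemize}
\item In \eqref{eq3.23} and \eqref{eq3.24}, the summation index $k$ is an integer. Replacing $G(k,\overline{\chi})$ by $\chi(k)G(1,\overline{\chi})$ and pulling the constant factor out of the sum gives \eqref{eq3.26} and \eqref{eq3.27} immediately.
\item In \eqref{eq3.25} the index $k$ is also an integer. The shift by $q/2$ appears only in the exponential and in the denominator, and $G(k,\overline{\chi})$ is still evaluated at the integer $k$. So the same substitution gives \eqref{eq3.28}, with the hypothesis $2\nmid q$ carried over unchanged.
\end{itemize}

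No rearrangement of the series is needed: the substitution acts term by term and multiplies every term by the same constant, so each series keeps whatever convergence it had in Corollary \ref{cor3.3}.

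The only genuine obstacle is the vanishing of $G(k,\overline{\chi})$ when $\gcd(k,q)>1$. This is exactly where primitivity is used, and the argument needs the standard fact that a primitive character is nontrivial on the subgroup of units congruent to $1$ modulo any proper divisor $q_1$ of $q$. I would cite this fact (e.g.\ from \cite{apostol4}) rather than reprove it.
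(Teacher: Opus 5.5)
Your proposal is correct and matches the paper's proof: apply Corollary \ref{cor3.3} with $\overline{\chi}$ in place of $\chi$ and substitute $G(k,\overline{\chi})=\chi(k)G(1,\overline{\chi})$, which the paper simply cites from \cite[Theorem 8.15]{apostol4} as \eqref{eq3.29}. Your self-contained proof of that identity is the standard one and is sound: the unit substitution handles $\gcd(k,q)=1$, and otherwise a $c\equiv1\pmod{q_1}$ with $\overline{\chi}(c)\neq1$ gives the vanishing.
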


\begin{proof} Since for a primitive character $\chi$ modulo $q$ we have the identity (see, e.g., \cite[Theorem 8.15]{apostol4})
\begin{equation}\label{eq3.29}
G(n,\chi)=\overline{\chi}(n)G(1,\chi)\quad(n\in\mathbb{Z}),
\end{equation}
applying it to \eqref{eq3.23}, \eqref{eq3.24} and \eqref{eq3.25} gives the desired results.
\end{proof}

Notably, using Euler's formula $e^{\mathrm{i}x}=\cos x+\mathrm{i}\sin x$ in \eqref{eq3.26}, we recover the generalized Bernoulli functions defined by Berndt \cite[Definition 1]{berndt3}.

Furthermore, Theorem \ref{thm3.1} yields the following result.

\begin{corollary}\label{cor3.5} Let $n,q\in\mathbb{N}$ and $x\in\mathbb{R}$. Let $\chi$ be a primitive character modulo $q$. Then, for $a\in\mathbb{Z}$ with $-\frac{q}{2}< a\leqslant\frac{q}{2}$,
\begin{eqnarray}\label{eq3.30}
&&\sum_{k=1}^{\infty}\frac{\chi(k+a)e^{\frac{2\pi \mathrm{i}kx}{q}}}{k^{n}}+(-1)^{n}\sum_{k=1}^{\infty}\frac{\chi(-k+a)e^{-\frac{2\pi \mathrm{i}kx}{q}}}{k^{n}}\nonumber\\
&&\qquad=-\frac{(2\pi\mathrm{i})^{n}\chi(-1)G(1,\chi)\overline{\beta}_{n,\overline{\chi}}(x,e^{\frac{2\pi\mathrm{i}a}{q}})}{q^{n}n!}.
\end{eqnarray}
\end{corollary}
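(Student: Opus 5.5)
The plan is to specialize Theorem \ref{thm3.1}, replacing $\chi$ by $\overline{\chi}$ and taking $\lambda=e^{\frac{2\pi\mathrm{i}a}{q}}$, and then to rewrite the resulting Fourier series with the Gauss-sum identity \eqref{eq3.29}.

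First, the hypothesis $-\frac{q}{2}<a\leqslant\frac{q}{2}$ makes $\frac{2\pi a}{q}\in(-\pi,\pi]$. So with the principal branch fixed in \eqref{eq1.15} we have exactly $\log\lambda=\frac{2\pi\mathrm{i}a}{q}$, hence $\frac{q\log\lambda}{2\pi\mathrm{i}}=a$ and $\lambda^{x}=e^{\frac{2\pi\mathrm{i}ax}{q}}$. This is the only place the range of $a$ is used, and it is what makes the denominator in \eqref{eq3.1} become $(k-a)^{n}$. Theorem \ref{thm3.1} then gives
\begin{equation*}
\overline{\beta}_{n,\overline{\chi}}(x,\lambda)=-\frac{q^{n-1}n!}{e^{\frac{2\pi\mathrm{i}ax}{q}}(2\pi\mathrm{i})^{n}}\sideset{}{'}\sum_{k=-\infty}^{+\infty}\frac{G(k,\overline{\chi})e^{\frac{2\pi\mathrm{i}kx}{q}}}{(k-a)^{n}},
\end{equation*}
where the only excluded term is $k=a$.

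Next, since $\overline{\chi}$ is also primitive, \eqref{eq3.29} gives $G(k,\overline{\chi})=\chi(k)G(1,\overline{\chi})$. I then substitute $k=m+a$ with $m\in\mathbb{Z}\setminus\{0\}$. The factor $e^{\frac{2\pi\mathrm{i}(m+a)x}{q}}$ splits as $e^{\frac{2\pi\mathrm{i}ax}{q}}e^{\frac{2\pi\mathrm{i}mx}{q}}$, and the first piece cancels the prefactor $\lambda^{-x}$. This leaves
\begin{equation*}
\overline{\beta}_{n,\overline{\chi}}(x,\lambda)=-\frac{q^{n-1}n!\,G(1,\overline{\chi})}{(2\pi\mathrm{i})^{n}}\sum_{m\neq0}\frac{\chi(m+a)e^{\frac{2\pi\mathrm{i}mx}{q}}}{m^{n}}.
\end{equation*}
Splitting into $m=k>0$ and $m=-k<0$ produces exactly the two series on the left of \eqref{eq3.30}, with the factor $(-1)^{n}$ coming from $(-k)^{n}$.

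Finally, I solve for that combination of series. The coefficient $1/G(1,\overline{\chi})$ is rewritten using the classical identity $G(1,\chi)G(1,\overline{\chi})=\chi(-1)q$ for primitive $\chi$. This follows from \eqref{eq3.29} together with $|G(1,\chi)|^{2}=q$ and $\overline{G(1,\chi)}=\chi(-1)G(1,\overline{\chi})$. It gives $1/G(1,\overline{\chi})=\chi(-1)G(1,\chi)/q$, and the constant then becomes $-\frac{(2\pi\mathrm{i})^{n}\chi(-1)G(1,\chi)}{q^{n}n!}$, as claimed.

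The main obstacle is the convergence bookkeeping when $n=1$. There the Fourier series in Theorem \ref{thm3.1} converges only as symmetric partial sums, by the Dirichlet--Jordan argument used in its proof. I would note that for $q\geqslant2$ the sequence $\chi(k+a)$ is periodic with zero mean over a period. By Abel summation, each one-sided series $\sum_{k\geqslant1}\chi(\pm k+a)e^{\pm\frac{2\pi\mathrm{i}kx}{q}}/k$ therefore converges separately, so the symmetric limit equals the sum of the two series. In the degenerate case $q=1$ the two series should be read together as a symmetric sum.
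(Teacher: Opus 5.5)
Your main derivation is the same as the paper's, and the algebra is correct. You specialize Theorem~\ref{thm3.1} to $\overline{\chi}$ and $\lambda=e^{2\pi\mathrm{i}a/q}$, where your remark that $-\frac q2<a\leqslant\frac q2$ forces $\log\lambda=2\pi\mathrm{i}a/q$ makes explicit what the paper leaves implicit. You then apply \eqref{eq3.29}, shift $k=m+a$, and invert $G(1,\overline{\chi})$ via $G(1,\chi)G(1,\overline{\chi})=\chi(-1)q$. The paper gets that last identity from a geometric sum rather than from $|G(1,\chi)|^{2}=q$, which makes no difference.

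What is wrong is the convergence paragraph for $n=1$. The zero mean of $\chi(k+a)$ is not the relevant quantity. Abel summation needs bounded partial sums of the twisted sequence $u_k=\chi(k+a)e^{2\pi\mathrm{i}kx/q}$. If $x\in\mathbb{Z}$, then $u_k$ is $q$-periodic with
\[
\sum_{k=1}^{q}u_k=e^{-\frac{2\pi\mathrm{i}ax}{q}}G(x,\chi)=e^{-\frac{2\pi\mathrm{i}ax}{q}}\overline{\chi}(x)G(1,\chi),
\]
and this is nonzero when $\gcd(x,q)=1$. So for an integer $x$ coprime to $q$, the first series in \eqref{eq3.30} diverges logarithmically even when $q\geqslant2$. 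The same substitution shows that $v_k=\chi(a-k)e^{-2\pi\mathrm{i}kx/q}$ is $q$-periodic with the same period sum, so the second series diverges as well.

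Because $(-1)^{1}=-1$, the two divergent parts cancel. In this case \eqref{eq3.30} therefore holds only as the symmetric limit $\lim_{N\to\infty}\sum_{k=1}^{N}(u_k-v_k)/k$. This is the same situation as your $q=1$ caveat; that caveat is really this case, since $\gcd(x,1)=1$.

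Separate convergence does hold in the remaining cases:
\begin{itemize}
\item When $x\notin\mathbb{Z}$, the block sums $\sum_{k=1}^{Jq}u_k=\bigl(\sum_{k=1}^{q}u_k\bigr)\sum_{j=0}^{J-1}e^{2\pi\mathrm{i}jx}$ are bounded, and likewise for $v_k$.
\item When $x\in\mathbb{Z}$ with $\gcd(x,q)>1$, we have $G(x,\chi)=0$.
\end{itemize}
This is exactly Berndt's convergence condition stated after \eqref{eq1.16}.

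Once you replace your claim with this case distinction, the proof is complete. Your shift $k=m+a$ changes symmetric partial sums only by $O(|a|/N)$, so that step is harmless. The paper's own proof does not address the $n=1$ convergence question at all.
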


\begin{proof}
Clearly, from \eqref{eq3.29} and the geometric sum stated in \cite[Theorem 8.1]{apostol4}, we have
\begin{eqnarray}\label{eq3.31}
G(1,\chi)G(1,\overline{\chi})&=&G(1,\chi)\sum_{r=1}^{q}\overline{\chi}(r)e^{\frac{2\pi \mathrm{i}r}{q}}\nonumber\\
&=&\sum_{r=1}^{q}G(r,\chi)e^{\frac{2\pi \mathrm{i}r}{q}}\nonumber\\
&=&\sum_{j=1}^{q}\chi(j)\sum_{r=1}^{q}e^{\frac{2\pi \mathrm{i}(j+1)r}{q}}\nonumber\\
&=&\chi(-1)q.
\end{eqnarray}
Hence, taking $\lambda=e^{\frac{2\pi\mathrm{i}a}{q}}$ in \eqref{eq3.1} and using \eqref{eq3.29} and \eqref{eq3.31}, we have
\begin{equation*}
\overline{\beta}_{n,\overline{\chi}}(x,e^{\frac{2\pi\mathrm{i}a}{q}})=-\frac{q^{n}n!\chi(-1)}{(2\pi \mathrm{i})^{n}G(1,\chi)}\sideset{}{'}\sum_{k=-\infty}^{+\infty}\frac{\chi(k)e^{\frac{2\pi\mathrm{i}(k-a)x}{q}}}{(k-a)^{n}},
\end{equation*}
as desired.
\end{proof}

In particular, the case $a=x=0$ in Corollary \ref{cor3.5} gives the following: For $n\in\mathbb{N}$ and $r\in\{0,1\}$, if $\chi(-1)=(-1)^{r}$ with $r$ and $n$ of the same parity, then for a primitive character $\chi$,
\begin{equation}\label{eq3.32}
L(n,\chi)=\frac{(-1)^{r+1}2^{n-1}\pi^{n}\mathrm{i}^{n}G(1,\chi)B_{n,\overline{\chi}}}{q^{n}n!},
\end{equation}
which is equivalent to the results shown in \cite[pp. 442--443]{neukirch} and \cite[Theorem 10.3.1]{cohen}. For other formulas of $L(n,\chi)$ at positive integers, the reader may consult the monograph of Shimura \cite{shimura}.

We next use Theorem \ref{thm3.1} to show that the values of Berndt's generalized $L$-function at integers can be explicitly evaluated in terms of the generalized Apostol-Bernoulli functions.

\begin{corollary}\label{cor3.6} Let $n,q\in\mathbb{N}$ with $q\geqslant2$ and $x\in\mathbb{R}$. Let $\chi$ be a primitive character modulo $q$. Then, for $a\in\mathbb{R}$ with $0\leqslant a\leqslant1$,
\begin{eqnarray}\label{eq3.33}
&&L(n,x,-a,\chi)+(-1)^{n}\chi(-1)L(n,-x,a,\chi)\nonumber\\
&&\qquad=-\frac{(2\pi\mathrm{i})^{n}e^{\frac{2\pi\mathrm{i}ax}{q}}\chi(-1)G(1,\chi)\overline{\beta}_{n,\overline{\chi}}(x,e^{\frac{2\pi\mathrm{i}a}{q}})}{q^{n}n!}.
\end{eqnarray}
\end{corollary}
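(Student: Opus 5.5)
The plan is to start from Theorem \ref{thm3.1} with the character $\overline{\chi}$ and $\lambda=e^{\frac{2\pi\mathrm{i}a}{q}}$, exactly as in the proof of Corollary \ref{cor3.5}, but now with $a\in[0,1]$ real rather than integral. Since $0\leqslant a\leqslant 1$ and $q\geqslant2$, we have $0\leqslant 2\pi a/q\leqslant\pi$. Hence the principal logarithm gives $\log\lambda=\frac{2\pi\mathrm{i}a}{q}$, so $\frac{q\log\lambda}{2\pi\mathrm{i}}=a$ and $\lambda^{x}=e^{\frac{2\pi\mathrm{i}ax}{q}}$. (The only boundary case needing care is $a=q/2$, which forces $q=2$, $a=1$; there $\arg\lambda=\pi$ and the formula is still valid.) Theorem \ref{thm3.1} then reads
\begin{equation*}
e^{\frac{2\pi\mathrm{i}ax}{q}}\overline{\beta}_{n,\overline{\chi}}(x,e^{\frac{2\pi\mathrm{i}a}{q}})=-\frac{q^{n-1}n!}{(2\pi\mathrm{i})^{n}}\sideset{}{'}\sum_{k=-\infty}^{+\infty}\frac{G(k,\overline{\chi})e^{\frac{2\pi\mathrm{i}kx}{q}}}{(k-a)^{n}}.
\end{equation*}

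Next I use the primitivity of $\chi$ and apply \eqref{eq3.29} to $\overline{\chi}$, which gives $G(k,\overline{\chi})=\chi(k)G(1,\overline{\chi})$. By \eqref{eq3.31}, $G(1,\overline{\chi})=\chi(-1)q/G(1,\chi)$. Substituting and rearranging yields
\begin{equation*}
\sideset{}{'}\sum_{k=-\infty}^{+\infty}\frac{\chi(k)e^{\frac{2\pi\mathrm{i}kx}{q}}}{(k-a)^{n}}=-\frac{(2\pi\mathrm{i})^{n}e^{\frac{2\pi\mathrm{i}ax}{q}}\chi(-1)G(1,\chi)\overline{\beta}_{n,\overline{\chi}}(x,e^{\frac{2\pi\mathrm{i}a}{q}})}{q^{n}n!},
\end{equation*}
using $\chi(-1)^{2}=1$.

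It remains to identify the left side with the two $L$-values via \eqref{eq1.16}. I split the sum into $k\geqslant0$ and $k<0$. The part with $k\geqslant0$ is $\sideset{}{'}\sum_{k\geqslant0}\chi(k)e^{2\pi\mathrm{i}kx/q}(k-a)^{-n}=L(n,x,-a,\chi)$; the dash is relevant only when $a=0$ or $a=1$, and there the excluded term matches the excluded undefined term. For $k=-m$ with $m\geqslant1$, the term is $\chi(-1)\chi(m)e^{-2\pi\mathrm{i}mx/q}(-1)^{n}(m+a)^{-n}$. Adding the $m=0$ term of $L(n,-x,a,\chi)$, which is $\chi(0)a^{-n}=0$ since $q\geqslant2$ (or omitted when $a=0$), this part is $(-1)^{n}\chi(-1)L(n,-x,a,\chi)$. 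This gives \eqref{eq3.33}.

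The main obstacle is convergence and bookkeeping for $n=1$. The Fourier series of Theorem \ref{thm3.1} is then a symmetric (principal-value) limit, while $L(1,\pm x,\mp a,\chi)$ are one-sided conditionally convergent series. I would first check that each one-sided series converges for $n=1$: this holds because $\chi$ is non-principal ($q\geqslant2$, $\chi$ primitive), so partial sums of $\chi(k)e^{2\pi\mathrm{i}kx/q}$ are bounded, and Abel summation applies. The symmetric partial sums then equal the sum of the two one-sided limits. I would also double-check the endpoint conventions: the $\frac12\delta$ term in $\overline{\beta}_{1}$ at integer $x$ is exactly what the Dirichlet–Jordan averaging produces, so it is already built into Theorem \ref{thm3.1}.
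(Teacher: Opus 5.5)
Your computation is the paper's proof: specialize Theorem \ref{thm3.1} to $\overline{\chi}$ and $\lambda=e^{2\pi\mathrm{i}a/q}$, rewrite the Gauss sums with \eqref{eq3.29} and \eqref{eq3.31}, and split the bilateral sum at $k=0$ using $\chi(0)=0$. The paper makes this split without comment. The gap is in your justification of the split for $n=1$.

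Non-principality of $\chi$ does not make the partial sums of $\chi(k)e^{2\pi\mathrm{i}kx/q}$ bounded. Suppose $x\in\mathbb{Z}$ with $\gcd(x,q)=1$. Then this sequence is $q$-periodic, and its sum over one period is $G(x,\chi)=\overline{\chi}(x)G(1,\chi)$, which has modulus $\sqrt{q}$. So the partial sums grow linearly, and the series defining $L(1,x,-a,\chi)$ and $L(1,-x,a,\chi)$ both diverge logarithmically. This is exactly why \eqref{eq1.16} requires $\Re(s)>1$ in this case: $s=1$ is Berndt's pole, with residue $G(x,\chi)/q$. Your boundedness claim does hold in the other cases. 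For $x\notin\mathbb{Z}$ it follows from the boundedness of $\sum_{m}e^{2\pi\mathrm{i}mx}$, and for integers $x$ with $\gcd(x,q)>1$ it holds because $G(x,\chi)=0$.

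The exceptional $x$ are precisely the jump points where $\frac{1}{2}\delta_{1,n}\delta_{\mathbb{Z}}(x)\lambda^{-x}\overline{\chi}(-x)\neq0$. So your Dirichlet--Jordan remark does not rescue the argument. At these points the Fourier series converges only as the symmetric limit over $|k|\leqslant N$, and it cannot be separated into two convergent one-sided series.

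Hence, for $n=1$ and $x\in\mathbb{Z}$ with $\gcd(x,q)=1$, the left side of \eqref{eq3.33} is not defined term by term. You must either exclude this case or say how the left side is to be read. One reading that works is as follows:
\begin{itemize}
\item In $L(s,x,-a,\chi)-\chi(-1)L(s,-x,a,\chi)$, the two terms have residues $G(x,\chi)/q$ and $\chi(-1)G(-x,\chi)/q=G(x,\chi)/q$ at $s=1$. The difference is therefore holomorphic there.
\item Write $\chi(k)e^{2\pi\mathrm{i}kx/q}$ as its mean $G(x,\chi)/q$ plus a mean-zero periodic part. This shows that the value of the combination at $s=1$ equals the symmetric limit of the bilateral sum over $|k|\leqslant N$, with the undefined term omitted.
\end{itemize}
With the left side interpreted this way, your computation goes through. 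For $n\geqslant2$, where everything converges absolutely, and for $n=1$ at all other $x$, your argument is complete as written.
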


\begin{proof}
Taking $\lambda=e^{\frac{2\pi\mathrm{i}a}{q}}$ in \eqref{eq3.1} and using \eqref{eq3.29} and \eqref{eq3.31}, we have
\begin{equation}\label{eq3.34}
e^{\frac{2\pi\mathrm{i}ax}{q}}\overline{\beta}_{n,\overline{\chi}}(x,e^{\frac{2\pi\mathrm{i}a}{q}})=-\frac{q^{n}n!\chi(-1)}{(2\pi \mathrm{i})^{n}G(1,\chi)}\sideset{}{'}\sum_{k=-\infty}^{+\infty}\frac{\chi(k)e^{\frac{2\pi\mathrm{i}kx}{q}}}{(k-a)^{n}}.
\end{equation}
It is clear that for $q\geq2$,
\begin{eqnarray*}
\sideset{}{'}\sum_{k=-\infty}^{+\infty}\frac{\chi(k)e^{\frac{2\pi\mathrm{i}kx}{q}}}{(k-a)^{n}}&=&\sideset{}{'}\sum_{k=1}^{+\infty}\frac{\chi(k)e^{\frac{2\pi\mathrm{i}kx}{q}}}{(k-a)^{n}}+
\sideset{}{'}\sum_{k=0}^{+\infty}\frac{\chi(-k)e^{\frac{-2\pi\mathrm{i}kx}{q}}}{(-k-a)^{n}}\nonumber\\
&=&\sideset{}{'}\sum_{k=0}^{+\infty}\frac{\chi(k)e^{\frac{2\pi\mathrm{i}kx}{q}}}{(k-a)^{n}}+(-1)^{n}\chi(-1)
\sideset{}{'}\sum_{k=0}^{+\infty}\frac{\chi(k)e^{\frac{-2\pi\mathrm{i}kx}{q}}}{(k+a)^{n}}.
\end{eqnarray*}
Thus, by inserting the above identity into \eqref{eq3.34}, we obtain the desired result.
\end{proof}

\begin{corollary}\label{cor3.7} Let $n,q\in\mathbb{N}$ with $q\geqslant2$ and let $\chi$ be a primitive character modulo $q$. Then, for $x,a\in\mathbb{R}$ with $0\leqslant x\leqslant1$ and $0\leqslant a\leqslant1$,
\begin{equation}\label{eq3.35}
L(1-n,x,a,\chi)=-\frac{\overline{\beta}_{n,\chi}(a,e^{\frac{2\pi\mathrm{i}x}{q}})}{n}.
\end{equation}
\end{corollary}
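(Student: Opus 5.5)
The plan is to combine Berndt's functional equation \eqref{eq1.17} at $s=n$ with Corollary \ref{cor3.6}, after swapping the roles of the parameters.

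\emph{Step 1.} Put $s=n$ in \eqref{eq1.17}. Since $0\leqslant x\leqslant 1$, $0\leqslant a\leqslant 1$ and $\chi$ is primitive, this gives
\[
L(1-n,x,a,\chi)=\frac{q^{n-1}(n-1)!}{(2\pi)^{n}}G(1,\chi)e^{-\frac{2\pi\mathrm{i}ax}{q}}\bigl(\mathrm{i}^{-n}L(n,a,-x,\overline{\chi})+\chi(-1)\mathrm{i}^{n}L(n,-a,x,\overline{\chi})\bigr).
\]
Factoring out $\mathrm{i}^{-n}$ and using $\mathrm{i}^{2n}=(-1)^{n}$, the bracket becomes
\[
\mathrm{i}^{-n}\bigl(L(n,a,-x,\overline{\chi})+(-1)^{n}\chi(-1)L(n,-a,x,\overline{\chi})\bigr).
\]
Also $\overline{\chi}(-1)=\chi(-1)$.

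\emph{Step 2.} Apply Corollary \ref{cor3.6} with $\overline{\chi}$ in place of $\chi$, $a$ in the role of $x$, and $x$ in the role of $a$. This requires $0\leqslant x\leqslant 1$, which holds. It yields
\[
L(n,a,-x,\overline{\chi})+(-1)^{n}\chi(-1)L(n,-a,x,\overline{\chi})=-\frac{(2\pi\mathrm{i})^{n}e^{\frac{2\pi\mathrm{i}ax}{q}}\chi(-1)G(1,\overline{\chi})\overline{\beta}_{n,\chi}(a,e^{\frac{2\pi\mathrm{i}x}{q}})}{q^{n}n!}.
\]

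\emph{Step 3.} Substitute into Step 1. The factors $e^{\pm 2\pi\mathrm{i}ax/q}$ cancel, and $\mathrm{i}^{-n}(2\pi\mathrm{i})^{n}/(2\pi)^{n}=1$. The constant left over is
\[
-\frac{q^{n-1}(n-1)!\,\chi(-1)G(1,\chi)G(1,\overline{\chi})}{q^{n}n!}.
\]
By \eqref{eq3.31}, $G(1,\chi)G(1,\overline{\chi})=\chi(-1)q$, and since $\chi(-1)^{2}=1$ the constant equals $-1/n$. This is \eqref{eq3.35}.

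The main obstacle is bookkeeping rather than any new idea. One has to check that Berndt's notation $L(s,a,-x,\overline{\chi})$ in \eqref{eq1.17} (with $a$ in the second slot) is matched exactly by the left side of \eqref{eq3.33} after the swap $x\leftrightarrow a$. One also has to check that the hypotheses of Corollary \ref{cor3.6} ($0\leqslant$ new $a\leqslant 1$, new $x$ real) are satisfied. Finally, for $n=1$ the series $L(1,\cdot)$ converge only conditionally, so they should be read as the symmetric limits used in Theorem \ref{thm3.1}; this matches Berndt's interpretation, and at boundary points the $\frac12$-correction in $\overline{\beta}_{1,\chi}$ accounts for it.
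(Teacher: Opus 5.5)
Your proposal is correct and matches the paper's proof: set $s=n$ in Berndt's functional equation \eqref{eq1.17}, apply Corollary \ref{cor3.6} with $\overline{\chi}$ in place of $\chi$ and the roles of $x$ and $a$ interchanged, and reduce the constant to $-1/n$ using $G(1,\chi)G(1,\overline{\chi})=\chi(-1)q$ from \eqref{eq3.31}. Your tracking of the factors $\mathrm{i}^{\mp n}$, $e^{\pm 2\pi\mathrm{i}ax/q}$ and $\chi(-1)^{2}=1$ is accurate, and your caveat that the $n=1$ case must be read as a symmetric limit is a sensible point that the paper leaves implicit.
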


\begin{proof}
Taking $s=n$ in Berndt's functional equation \eqref{eq1.17}, we have
\begin{eqnarray*}
L(1-n,x,a,\chi)&=&\frac{q^{n-1}(n-1)!}{(2\pi \mathrm{i})^{n}}G(1,\chi)e^{-\frac{2\pi \mathrm{i}ax}{q}}\nonumber\\
&&\times\bigl(L(n,a,-x,\overline{\chi})+(-1)^{n}\chi(-1)L(n,-a,x,\overline{\chi})\bigl).
\end{eqnarray*}
Therefore, by applying Corollary \ref{cor3.6} to the right-hand side of the above identity, with the help of \eqref{eq3.31}, we prove Corollary \ref{cor3.7}.
\end{proof}

To conclude this paper, we use Theorem \ref{thm3.1} to derive a result analogous to Proposition \ref{pro2.4} as follows.

\begin{theorem}\label{thm3.8} Let $q,a,b\in\mathbb{N}$ and $\lambda,\mu\in\mathbb{C}\setminus\{0\}$ with $\lambda^{a}=\mu^{b}$. Let $\chi$ be a primitive character modulo $q$. Then, for $n\in\mathbb{N}$,
\begin{equation}\label{eq3.36}
a^{n-1}\sum_{k=0}^{aq-1}\chi(k)\lambda^{k}\overline{\beta}_{n,\chi}\biggl(bx+\frac{bk}{a},\mu\biggl)
=b^{n-1}\sum_{k=0}^{bq-1}\chi(k)\mu^{k}\overline{\beta}_{n,\chi}\biggl(ax+\frac{ak}{b},\lambda\biggl).
\end{equation}
\end{theorem}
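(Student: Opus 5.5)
The plan is to prove \eqref{eq3.36} by substituting the Fourier expansion of Theorem \ref{thm3.1} into both sides. I would then evaluate the finite sums over $k$ by orthogonality of additive characters, and reduce each side to one and the same double-indexed series that is manifestly symmetric in $(a,\lambda)\leftrightarrow(b,\mu)$.

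\emph{Step 1: fix the branches.} Since $\lambda^{a}=\mu^{b}$ with $\lambda^{z}=e^{z\log\lambda}$, there is an integer $j$ such that
\begin{equation*}
a\log\lambda-b\log\mu=2\pi\mathrm{i}j .
\end{equation*}
Consequently $\lambda^{k}\mu^{-bk/a}=e^{2\pi\mathrm{i}jk/a}$. This relation replaces every use of $\lambda^{a}=\mu^{b}$ below, which matters because $\mu^{b/a}$ need not equal $\lambda$.

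\emph{Step 2: expand and sum over $k$ on the left.} By \eqref{eq3.1}, the left side of \eqref{eq3.36} equals
\begin{equation*}
-\frac{a^{n-1}q^{n-1}n!}{(2\pi\mathrm{i})^{n}}\sideset{}{'}\sum_{m}\frac{G(m,\chi)\,\mu^{-bx}e^{\frac{2\pi\mathrm{i}mbx}{q}}}{(m-\frac{q\log\mu}{2\pi\mathrm{i}})^{n}}\sum_{k=0}^{aq-1}\chi(k)\,e^{\frac{2\pi\mathrm{i}k(jq+bm)}{aq}} .
\end{equation*}
For the inner sum, write $k=r+ql$ with $1\leqslant r\leqslant q$ and $0\leqslant l\leqslant a-1$. (The terms $k=0$ and $k=aq$ may be exchanged by periodicity; for $q\geqslant 2$ both vanish since $\chi(0)=0$.) The sum over $l$ of $e^{2\pi\mathrm{i}l(jq+bm)/a}$ equals $a$ if $a\mid jq+bm$ and $0$ otherwise. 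So only those $m$ with $jq+bm=am'$ for some integer $m'$ survive, and for them the inner sum equals $aG(m',\chi)$. Since $\chi$ is primitive, \eqref{eq3.29} gives
\begin{equation*}
G(m,\chi)G(m',\chi)=\overline{\chi}(m)\overline{\chi}(m')G(1,\chi)^{2}.
\end{equation*}

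\emph{Step 3: rewrite in symmetric form.} Multiplying the relation of Step 1 by $q/(2\pi\mathrm{i})$ and using $bm=am'-jq$, one checks two identities:
\begin{equation*}
b\Bigl(m-\tfrac{q\log\mu}{2\pi\mathrm{i}}\Bigr)=a\Bigl(m'-\tfrac{q\log\lambda}{2\pi\mathrm{i}}\Bigr),\qquad
bx\Bigl(\tfrac{2\pi\mathrm{i}m}{q}-\log\mu\Bigr)=ax\Bigl(\tfrac{2\pi\mathrm{i}m'}{q}-\log\lambda\Bigr).
\end{equation*}
Using the first identity, the left side becomes
\begin{equation*}
-\frac{q^{n-1}n!\,G(1,\chi)^{2}\,a^{n}b^{n}}{(2\pi\mathrm{i})^{n}}\sideset{}{'}\sum_{bm+jq=am'}\frac{\overline{\chi}(m)\overline{\chi}(m')\,e^{bx(\frac{2\pi\mathrm{i}m}{q}-\log\mu)}}{\bigl(bm-\frac{bq\log\mu}{2\pi\mathrm{i}}\bigr)^{n}} .
\end{equation*}
By the two identities, the expression is invariant under the simultaneous swap $(a,\lambda,m,j)\leftrightarrow(b,\mu,m',-j)$. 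The index set $\{bm+jq=am'\}$ is invariant under the same swap, and so are the excluded (undefined) terms, since both denominators vanish together. Performing the identical computation on the right side of \eqref{eq3.36}, with $-j$ in place of $j$, yields the same expression, which proves the theorem.

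\emph{Main obstacle.} The delicate point is justifying the interchange of the finite $k$-sum with the Fourier series, and the reindexing $m\mapsto m'$, when $n=1$. There the series converge only conditionally, as symmetric partial sums $\sum_{|m|\leqslant N}$, and the function has jump points where the value $\overline{\beta}_{1,\chi}$ (with its $\tfrac12$-correction) is precisely what the Dirichlet--Jordan theorem delivers. I would handle this by working throughout with symmetric partial sums in $m$. Interchanging with a finite $k$-sum is harmless. The bijection $m\mapsto m'=(bm+jq)/a$ maps the range $|m|\leqslant N$ onto a range that is $|m'|\leqslant bN/a$ up to a bounded shift, and the finitely many boundary terms differ by $O(1/N)$, which tends to $0$. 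For $n\geqslant 2$ the series converge absolutely and no such care is needed.
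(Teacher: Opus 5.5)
Your proposal is correct and follows essentially the same route as the paper: expand both sides with Theorem \ref{thm3.1}, and collapse the $k$-sum by the geometric sum to $a\,G\bigl(\tfrac{bm+jq}{a},\chi\bigr)$ or $0$. Then factor the Gauss sums via \eqref{eq3.29} and match the surviving index sets; your bijection on pairs $(m,m')$ with $bm+jq=am'$ is a cleaner form of the paper's explicit parametrization of the solutions of $ax+by=rq$, and your symmetric-partial-sum argument for $n=1$ supplies a justification of the reindexing that the paper omits.
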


\begin{proof}
Since $\lambda^{a}=\mu^{b}$, there exists $r\in\mathbb{Z}$ such that $a\log\lambda=b\log\mu+2\pi\mathrm{i}r$. So from \eqref{eq3.1}, we have
\begin{eqnarray}\label{eq3.37}
&&a^{n-1}\sum_{k=0}^{aq-1}\chi(k)\lambda^{k}\overline{\beta}_{n,\chi}\biggl(bx+\frac{bk}{a},\mu\biggl)\nonumber\\
&&\qquad=-\frac{a^{n-1}q^{n-1}n!}{e^{bx\log\mu}(2\pi \mathrm{i})^{n}}\sideset{}{'}\sum_{l=-\infty}^{+\infty}\frac{G(l,\chi)e^{\frac{2\pi\mathrm{i}blx}{q}}}{(l-\frac{q\log\mu}{2\pi\mathrm{i}})^{n}}\sum_{k=1}^{aq}\chi(-k)e^{-\frac{2\pi\mathrm{i}(bl+rq)k}{aq}}.
\end{eqnarray}
Note that from the geometric sum we have
\begin{eqnarray}\label{eq3.38}
&&\sum_{k=1}^{aq}\chi(-k)e^{-\frac{2\pi\mathrm{i}(bl+rq)k}{aq}}\nonumber\\
&&\qquad=\sum_{k=1}^{q}\chi(-k)e^{-\frac{2\pi\mathrm{i}(bl+rq)k}{aq}}+e^{-\frac{2\pi\mathrm{i}(bl+rq)}{a}}
\sum_{k=1}^{q}\chi(-k)e^{-\frac{2\pi\mathrm{i}(bl+rq)k}{aq}}+\cdots\nonumber\\
&&\qquad\quad+e^{-\frac{2\pi\mathrm{i}(bl+rq)(a-1)}{a}}\sum_{k=1}^{q}\chi(-k)e^{-\frac{2\pi\mathrm{i}(bl+rq)k}{aq}}\nonumber\\
&&\qquad=\sum_{k=1}^{q}\chi(-k)e^{-\frac{2\pi\mathrm{i}(bl+rq)k}{aq}}\sum_{j=0}^{a-1}e^{-\frac{2\pi\mathrm{i}(bl+rq)j}{a}}\nonumber\\
&&\qquad= a\epsilon(a,bl+rq)\sum_{k=1}^{q}\chi(-k)e^{-\frac{2\pi\mathrm{i}(bl+rq)k}{aq}},
\end{eqnarray}
where $\epsilon(a,l)=1$ or $0$ according to $a\mid l$ or $a\nmid l$. Hence, inserting \eqref{eq3.38} into \eqref{eq3.37}, we obtain from \eqref{eq3.29} that
\begin{eqnarray}\label{eq3.39}
&&a^{n-1}\sum_{k=0}^{aq-1}\chi(k)\lambda^{k}\overline{\beta}_{n,\chi}\biggl(bx+\frac{bk}{a},\mu\biggl)\nonumber\\
&&\qquad=-\frac{a^{n}q^{n-1}n!G(1,\chi)^{2}}{e^{bx\log\mu}(2\pi \mathrm{i})^{n}}\sideset{}{'}\sum_{\substack{l=-\infty\\a\mid (bl+rq)}}^{+\infty}\frac{\overline{\chi}(l)\overline{\chi}(\frac{bl+rq}{a})e^{\frac{2\pi\mathrm{i}blx}{q}}}{(l-\frac{q\log\mu}{2\pi\mathrm{i}})^{n}}.
\end{eqnarray}
In a consideration to \eqref{eq3.39}, we find that
\begin{eqnarray}\label{eq3.40}
&&b^{n-1}\sum_{k=0}^{bq-1}\chi(k)\mu^{k}\overline{\beta}_{n,\chi}\biggl(ax+\frac{ak}{b},\lambda\biggl)\nonumber\\
&&\qquad=-\frac{b^{n}q^{n-1}n!G(1,\chi)^{2}}{e^{ax\log\lambda}(2\pi \mathrm{i})^{n}}\sideset{}{'}\sum_{\substack{l=-\infty\\b\mid (al-rq)}}^{+\infty}\frac{\overline{\chi}(l)\overline{\chi}(\frac{al-rq}{b})e^{\frac{2\pi\mathrm{i}alx}{q}}}{(l-\frac{q\log\lambda}{2\pi\mathrm{i}})^{n}}.
\end{eqnarray}
Since the Diophantine equation $ax+by=rq$ is solvable if and only if $\gcd(a,b)\mid rq$, and in the case when it is solvable, all solutions of $ax+by=rq$ are given by
\begin{equation*}
x=x_{0}+\frac{bd}{\gcd(a,b)},\quad y=y_{0}-\frac{ad}{\gcd(a,b)},
\end{equation*}
where $x_{0},y_{0},d\in\mathbb{Z}$ with $ax_{0}+by_{0}=rq$. Let $\gcd(a,b)=c$. It follows from \eqref{eq3.39} and \eqref{eq3.40} that
\begin{eqnarray*}
&&a^{n-1}\sum_{k=0}^{aq-1}\chi(k)\lambda^{k}\overline{\beta}_{n,\chi}\biggl(bx+\frac{bk}{a},\mu\biggl)\nonumber\\
&&\qquad=-\frac{a^{n}b^{n}q^{n-1}n!G(1,\chi)^{2}}{e^{bx\log\mu}(2\pi \mathrm{i})^{n}}\sideset{}{'}\sum_{\substack{l=-\infty\\a\mid (bl+rq)}}^{+\infty}\frac{\overline{\chi}(l)\overline{\chi}(\frac{bl+rq}{a})e^{\frac{2\pi\mathrm{i}blx}{q}}}{(bl+rq-\frac{qa\log\lambda}{2\pi\mathrm{i}})^{n}}\nonumber\\
&&\qquad=-\frac{b^{n}q^{n-1}n!G(1,\chi)^{2}}{e^{bx\log\mu}(2\pi \mathrm{i})^{n}}\sideset{}{'}\sum_{d=-\infty}^{+\infty}\frac{\overline{\chi}(-y_{0}-\frac{ad}{c})\overline{\chi}(x_{0}-\frac{bd}{c})e^{\frac{2\pi\mathrm{i}bx(-y_{0}-\frac{ad}{c})}{q}}}{(x_{0}-\frac{bd}{c}-\frac{q\log\lambda}{2\pi\mathrm{i}})^{n}}\nonumber\\
&&\qquad=-\frac{b^{n}q^{n-1}n!G(1,\chi)^{2}}{e^{ax\log\lambda}(2\pi \mathrm{i})^{n}}\sideset{}{'}\sum_{d=-\infty}^{+\infty}\frac{\overline{\chi}(-y_{0}-\frac{ad}{c})\overline{\chi}(x_{0}-\frac{bd}{c})e^{\frac{2\pi\mathrm{i}ax(x_{0}-\frac{bd}{c})}{q}}}{(x_{0}-\frac{bd}{c}-\frac{q\log\lambda}{2\pi\mathrm{i}})^{n}}.
\end{eqnarray*}
and
\begin{eqnarray*}
&&b^{n-1}\sum_{k=0}^{bq-1}\chi(k)\mu^{k}\overline{\beta}_{n,\chi}\biggl(ax+\frac{ak}{b},\lambda\biggl)\nonumber\\
&&\qquad=-\frac{b^{n}q^{n-1}n!G(1,\chi)^{2}}{e^{ax\log\lambda}(2\pi \mathrm{i})^{n}}\sideset{}{'}\sum_{d=-\infty}^{+\infty}\frac{\overline{\chi}(x_{0}-\frac{bd}{c})\overline{\chi}(-y_{0}-\frac{ad}{c})e^{\frac{2\pi\mathrm{i}ax(x_{0}-\frac{bd}{c})}{q}}}{(x_{0}-\frac{bd}{c}-\frac{q\log\lambda}{2\pi\mathrm{i}})^{n}}.
\end{eqnarray*}
Thus, equating the above two identities, we obtain \eqref{eq3.36} and finish the proof of Theorem \ref{thm3.8}.
\end{proof}

We next discuss some special cases of Theorem \ref{thm3.8}. We obtain the following results for the Apostol-Bernoulli functions and the Apostol-Euler functions.

\begin{corollary}\label{cor3.9} Let $a,b\in\mathbb{N}$ and $\lambda,\mu\in\mathbb{C}\setminus\{0\}$. If $\lambda^{a}=\mu^{b}$, then for $n\in\mathbb{N}$,
\begin{equation}\label{eq3.41}
a^{n-1}\sum_{k=0}^{a-1}\lambda^{k}\overline{\beta}_{n}\biggl(bx+\frac{bk}{a},\mu\biggl)=b^{n-1}\sum_{k=0}^{b-1}\mu^{k}\overline{\beta}_{n}\biggl(ax+\frac{ak}{b},\lambda\biggl),
\end{equation}
and if $(-\lambda)^{a}=(-\mu)^{b}$, then for $n\in\mathbb{N}_{0}$,
\begin{equation}\label{eq3.42}
a^{n}\sum_{k=0}^{a-1}(-\lambda)^{k}\overline{\varepsilon}_{n}\biggl(bx+\frac{bk}{a},\mu\biggl)=b^{n}\sum_{k=0}^{b-1}(-\mu)^{k}\overline{\varepsilon}_{n}\biggl(ax+\frac{ak}{b},\lambda\biggl).
\end{equation}
\end{corollary}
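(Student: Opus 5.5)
Both identities of Corollary \ref{cor3.9} will be read off from Theorem \ref{thm3.8} in the case $q=1$, where the only character is the trivial one, $\chi\equiv1$. Alternatively, they can be proved directly from the Fourier expansion \eqref{eq3.21} of Corollary \ref{cor3.2}; this is the route I would keep as a check, since Theorem \ref{thm3.8} is stated for primitive $\chi$ and one must confirm that $q=1$ is admissible. The trivial character modulo $1$ is primitive in the usual convention, with $G(k,\chi)=1$ and $G(1,\chi)^2=1$, so the proof of Theorem \ref{thm3.8} goes through unchanged.

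\emph{First identity.} Put $q=1$ in \eqref{eq3.36}. The sums over $k$ run over $0\leqslant k\leqslant a-1$ and $0\leqslant k\leqslant b-1$, and $\chi(k)=1$ for all $k$. By \eqref{eq2.33}--\eqref{eq2.35} we have $\overline{\beta}_{n,\chi}=\overline{\beta}_{n}$ when $q=1$: indeed \eqref{eq2.34} reduces to $\lambda^{-[x+1]}\beta_n(\{x+1\},\lambda)=\lambda^{-1}\lambda^{-[x]}\beta_n(\{x\},\lambda)$. This introduces a factor $\lambda^{-1}$ (respectively $\mu^{-1}$), consistent with \eqref{eq2.2}. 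The boundary term in \eqref{eq2.33} likewise carries $\lambda^{-x}$ rather than $\lambda^{-x-1}$.

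The factor issue above is the main obstacle, so rather than track it, I would prove \eqref{eq3.41} directly from \eqref{eq3.21}. Substitute the Fourier series for $\overline{\beta}_n(bx+bk/a,\mu)$ into the left side. The sum over $k$ then produces
\[
\sum_{k=0}^{a-1}\lambda^{k}\mu^{-bk/a}e^{2\pi\mathrm{i}blk/a}.
\]
Writing $a\log\lambda=b\log\mu+2\pi\mathrm{i}r$, this becomes a geometric sum equal to $a$ if $a\mid(bl+r)$ and $0$ otherwise. Setting $bl+r=am$, the left side becomes
\[
-\frac{a^{n}b^{n}n!}{\mu^{bx}(2\pi\mathrm{i})^n}\sum_{l,m:\,bl+r=am}\frac{e^{2\pi\mathrm{i}blx}}{(am-a\tfrac{\log\lambda}{2\pi\mathrm{i}})^{n}}.
\]
Here I used $bl-b\frac{\log\mu}{2\pi\mathrm{i}}=am-a\frac{\log\lambda}{2\pi\mathrm{i}}$, and I would rewrite the exponent via $\mu^{-bx}e^{2\pi\mathrm{i}blx}=\lambda^{-ax}e^{2\pi\mathrm{i}amx}$. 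The resulting expression is symmetric under $(a,\lambda,l)\leftrightarrow(b,\mu,m)$, which gives \eqref{eq3.41}.

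The excluded ("undefined") terms correspond on both sides to $m=\frac{\log\lambda}{2\pi\mathrm{i}}$, equivalently $l=\frac{\log\mu}{2\pi\mathrm{i}}$, so they match. Convergence for $n=1$ is handled by symmetric partial sums, as in Theorem \ref{thm3.1}.

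\emph{Second identity.} For \eqref{eq3.42} I would use \eqref{eq3.22} in exactly the same way, with half-integer frequencies $k-\tfrac12$. Put $-\lambda=e^{\log(-\lambda)}$ and write $a\log(-\lambda)=b\log(-\mu)+2\pi\mathrm{i}r$. The geometric sum $\sum_{k}(-\lambda)^{k}\mu^{-bk/a}e^{2\pi\mathrm{i}b(l-1/2)k/a}$ then collapses in the same way, because
\[
\mu^{-bk/a}e^{-\pi\mathrm{i}bk/a}=(-\mu)^{-bk/a}
\]
up to the branch choice encoded in $r$. The $n+1$ powers give the prefactors $a^{n}$ and $b^{n}$ after multiplying the matched denominators by $a^{n+1}$ and $b^{n+1}$. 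Symmetry again yields the claim.

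The main care point throughout is bookkeeping of branches of $\log$, which the integer $r$ absorbs.
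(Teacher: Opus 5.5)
Your proof is correct, but the ``factor issue'' that makes you abandon the direct specialization comes from a computational slip. In \eqref{eq2.34} with $q=1$ the only term is $k=1$, and its weight is $\lambda^{k-q[\frac{x+k}{q}]}=\lambda^{1-[x+1]}=\lambda^{-[x]}$. You dropped the factor $\lambda^{k}=\lambda$; this is exactly the $\lambda$ of \eqref{eq2.2}, and it cancels the $\lambda^{-1}$ coming from $[x+1]=[x]+1$. The boundary terms also agree, since $\chi(-x)=1$. So $\overline{\beta}_{n,\chi}=\overline{\beta}_{n}$ exactly when $q=1$, as the paper states after Definition \ref{def2.7}.

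Consequently the paper's one-line proof (put $q=1$ in \eqref{eq3.36}) works as it stands. Your Fourier computation for \eqref{eq3.41} collapses the geometric sum via $a\log\lambda=b\log\mu+2\pi\mathrm{i}r$ and reindexes by $bl+r=am$. That is just the $q=1$ case of the paper's proof of Theorem \ref{thm3.8}, with $G(l,\chi)=1$. It is a valid, self-contained re-derivation, not a different route.

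For \eqref{eq3.42} you do take a longer road than the paper. By \eqref{eq2.2} and \eqref{eq2.3}, $\varepsilon_{n}(x,\lambda)=-\frac{2}{n+1}\beta_{n+1}(x,-\lambda)$. Comparing \eqref{eq2.35} with \eqref{eq2.36}, and using $(-\lambda)^{-[x]}=(-1)^{[x]}\lambda^{-[x]}$, gives $\overline{\varepsilon}_{n}(x,\lambda)=-\frac{2}{n+1}\overline{\beta}_{n+1}(x,-\lambda)$. Hence \eqref{eq3.42} is simply \eqref{eq3.41} applied to the pair $(-\lambda,-\mu)$ and index $n+1$, multiplied by $-2/(n+1)$. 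The hypothesis needed there is precisely $(-\lambda)^{a}=(-\mu)^{b}$, and no branch bookkeeping arises.

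Your direct route through \eqref{eq3.22} also works. The cleanest way to see it: $L_{\mu}=\log\mu+\pi\mathrm{i}$ and $L_{\lambda}=\log\lambda+\pi\mathrm{i}$ are logarithms of $-\mu$ and $-\lambda$, so $aL_{\lambda}-bL_{\mu}\in2\pi\mathrm{i}\mathbb{Z}$. In these terms \eqref{eq3.22} has the same shape as \eqref{eq3.21}, and the first computation repeats verbatim with $\log\lambda,\log\mu$ replaced by $L_{\lambda},L_{\mu}$. It buys nothing over the one-line reduction, though.
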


\begin{proof}
Setting $q=1$ in \eqref{eq3.36} gives \eqref{eq3.41}. Similarly, setting $q=1$ in \eqref{eq3.36}, replacing $\lambda$ by $-\lambda$, $\mu$ by $-\mu$ and $n$ by $n+1$, and then multiplying by
$-2/(n+1)$ yields \eqref{eq3.42}.
\end{proof}

In particular, taking $\lambda=\mu=b=1$ in \eqref{eq3.41} and \eqref{eq3.42} gives that for $n,a\in\mathbb{N}$,
\begin{equation}\label{eq3.43}
a^{n-1}\sum_{k=0}^{a-1}\overline{B}_{n}\biggl(x+\frac{k}{a}\biggl)=\overline{B}_{n}(ax),
\end{equation}
and for $n\in\mathbb{N}_{0}$ and $a\in\mathbb{N}$ with $2\nmid a$,
\begin{equation}\label{eq3.44}
a^{n}\sum_{k=0}^{a-1}(-1)^{k}\overline{E}_{n}\biggl(x+\frac{k}{a}\biggl)=\overline{E}_{n}(ax).
\end{equation}
If we take $\lambda=-1$ and $\mu=1$ in \eqref{eq3.41}, then for $n,a\in\mathbb{N}$ with $2\mid a$,
\begin{equation}\label{eq3.45}
a^{n-1}\sum_{k=0}^{a-1}(-1)^{k}\overline{B}_{n}\biggl(x+\frac{k}{a}\biggl)=-\frac{n\overline{E}_{n-1}(ax)}{2}.
\end{equation}
Formula \eqref{eq3.43} is due to Raabe \cite{raabe}, is usually called Raabe's formula, and plays a fundamental role in the theory of Dedekind sums (see, for example, \cite{bayad1,carlitz,hall,he2}).

We also obtain the following results for the generalized Bernoulli functions and the generalized Euler functions of the first and second kinds.

\begin{corollary}\label{cor3.10} Let $q,a,b\in\mathbb{N}$. Let $\chi$ be a primitive character modulo $q$. Then, for $n\in\mathbb{N}$,
\begin{equation}\label{eq3.46}
a^{n-1}\sum_{k=0}^{aq-1}\chi(k)\overline{B}_{n}\biggl(bx+\frac{bk}{a},\chi\biggl)=b^{n-1}\sum_{k=0}^{bq-1}\chi(k)\overline{B}_{n}\biggl(ax+\frac{ak}{b},\chi\biggl).
\end{equation}
Furthermore, if $e^{\frac{\pi \mathrm{i}a}{q}}=e^{\frac{\pi \mathrm{i}b}{q}}$, then for $n\in\mathbb{N}_{0}$,
\begin{equation}\label{eq3.47}
a^{n}\sum_{k=0}^{aq-1}\chi(k)e^{\frac{\pi \mathrm{i}k}{q}}\overline{e}_{n}\biggl(bx+\frac{bk}{a},\chi\biggl)=b^{n}\sum_{k=0}^{bq-1}\chi(k)e^{\frac{\pi \mathrm{i}k}{q}}\overline{e}_{n}\biggl(ax+\frac{ak}{b},\chi\biggl),
\end{equation}
and if $(-1)^{a}=(-1)^{b}$ and $2\nmid q$, then for $n\in\mathbb{N}_{0}$,
\begin{equation}\label{eq3.48}
a^{n}\sum_{k=0}^{aq-1}\chi(k)(-1)^{k}\overline{E}_{n}\biggl(bx+\frac{bk}{a},\chi\biggl)=b^{n}\sum_{k=0}^{bq-1}\chi(k)(-1)^{k}\overline{E}_{n}\biggl(ax+\frac{ak}{b},\chi\biggl).
\end{equation}
\end{corollary}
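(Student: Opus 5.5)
Corollary \ref{cor3.10} follows by specializing Theorem \ref{thm3.8} to three particular pairs $(\lambda,\mu)$. In each case I check the hypothesis $\lambda^{a}=\mu^{b}$ and translate $\overline{\beta}_{n,\chi}$ into the relevant Bernoulli or Euler function. This parallels how Corollary \ref{cor2.6} was obtained from Proposition \ref{pro2.4}.

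For \eqref{eq3.46}, take $\lambda=\mu=1$ in \eqref{eq3.36}. Then $\lambda^{a}=\mu^{b}=1$ holds for all $a,b$, and the weights $\lambda^{k},\mu^{k}$ are all $1$. By \eqref{eq2.37}, $\overline{\beta}_{n,\chi}(x,1)=\overline{B}_{n}(x,\chi)$. To confirm this I compare \eqref{eq2.34} at $\lambda=1$ with $B_{n}(\{x\}_{\chi},\chi)$; here $\lambda^{k-q[\cdot]}=1$, and the correction terms $\frac12\delta_{1,n}\delta_{\mathbb{Z}}(x)\chi(-x)$ agree. So \eqref{eq3.46} is \eqref{eq3.36} verbatim.

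For \eqref{eq3.47}, take $\lambda=\mu=e^{\frac{\pi\mathrm{i}}{q}}$. The condition $\lambda^{a}=\mu^{b}$ becomes exactly $e^{\frac{\pi\mathrm{i}a}{q}}=e^{\frac{\pi\mathrm{i}b}{q}}$, and the weights become $e^{\frac{\pi\mathrm{i}k}{q}}$. I apply \eqref{eq3.36} with $n+1$ in place of $n$ and multiply both sides by $-2/(n+1)$. This turns $a^{n}$ into the stated power, and I must check the function-level analogue of \eqref{eq2.6}:
\[
-\frac{2}{n+1}\overline{\beta}_{n+1,\chi}\bigl(x,e^{\frac{\pi\mathrm{i}}{q}}\bigr)=\overline{e}_{n}(x,\chi).
\]
This is cleanest to verify through the Fourier series. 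Corollary \ref{cor3.3}, formula \eqref{eq3.24}, was derived from \eqref{eq3.1} by exactly this substitution and rescaling. Since $\overline{e}_{n}$ is characterized by that series, the identity follows.

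Equivalently, one can check it directly from \eqref{eq2.34} and \eqref{eq2.33}. The $\beta_{n}(\cdot,\lambda^{q})=\beta_{n}(\cdot,-1)$ terms convert to Euler polynomials via \eqref{eq2.3}. The delta term $\frac12\delta_{1,n+1}\delta_{\mathbb{Z}}(x)\lambda^{-x}\chi(-x)$, after multiplication by $-2/(n+1)$ with $n=0$, gives the correction term $-\delta_{0,n}\delta_{\mathbb{Z}}(x)e^{-\frac{\pi\mathrm{i}x}{q}}\chi(-x)$ of \eqref{eq2.38}.

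For \eqref{eq3.48}, take $\lambda=\mu=-1$ with $2\nmid q$. The condition is $(-1)^{a}=(-1)^{b}$, and the weights are $(-1)^{k}$. The same $n\mapsto n+1$ and $-2/(n+1)$ rescaling applies, now using \eqref{eq3.25} and \eqref{eq2.39}.

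The main obstacle is the bookkeeping in these last two cases. With $n\in\mathbb{N}_{0}$ the shifted index $n+1\ge1$ makes Theorem \ref{thm3.8} applicable. I must also make sure the $\frac12\delta_{1,n}$ jump correction in \eqref{eq2.33} matches the $\delta_{0,n}$ corrections in \eqref{eq2.38}–\eqref{eq2.39}. I would settle both via the Fourier expansions of Corollary \ref{cor3.3}, which were obtained by the identical specialization.
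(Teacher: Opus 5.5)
Your proposal is correct and is exactly the paper's argument: specialize \eqref{eq3.36} to $\lambda=\mu=1$, $\lambda=\mu=e^{\frac{\pi\mathrm{i}}{q}}$ and $\lambda=\mu=-1$, and in the last two cases replace $n$ by $n+1$ and multiply by $-2/(n+1)$. One caution: justify $-\frac{2}{n+1}\overline{\beta}_{n+1,\chi}(x,e^{\frac{\pi\mathrm{i}}{q}})=\overline{e}_{n}(x,\chi)$ (and its $\lambda=-1$ analogue) by your direct check from \eqref{eq2.33}, \eqref{eq2.34} and \eqref{eq2.3}, not via Corollary \ref{cor3.3}, because \eqref{eq3.24}--\eqref{eq3.25} were themselves derived using that identity, so that route is circular.
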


\begin{proof}
Taking $\lambda=\mu=1$ in \eqref{eq3.36}, we get \eqref{eq3.46}. Taking $\lambda=\mu=e^{\frac{\pi\mathrm{i}}{q}}$ and $\lambda=\mu=-1$ in \eqref{eq3.36}, replacing $n$ by $n+1$, and then multiplying by
$-2/(n+1)$, we obtain \eqref{eq3.47} and \eqref{eq3.48}, respectively.
\end{proof}

\end{document}